\documentclass[11pt]{article}

\usepackage{amssymb,amsfonts,amsmath,amsthm}
\usepackage{apacite} 
\usepackage{bm}
\usepackage{booktabs}
\usepackage{cases}
\usepackage{csquotes}
\usepackage{cleveref}
\usepackage{comment}
\usepackage{color, colortbl}
\usepackage[font={small},width=.9\textwidth]{caption}
\usepackage{enumerate}
\usepackage{fancyhdr} 
\usepackage[margin=1in]{geometry}
\usepackage{graphicx,rotating}
\usepackage{indentfirst}
\usepackage{longtable}
\usepackage{mathrsfs}
\usepackage{mwe,tikz}
\usepackage{mathtools}
\usepackage[elide]{natbib}
\usepackage[percent]{overpic}
\usepackage{pslatex} 
\usepackage{pdflscape}
\usepackage{picture,xcolor}
\usepackage{setspace}
\usepackage{subcaption}
\usepackage{titlesec}
\usepackage{tocloft}
\usepackage{units}

\usepackage{apalike} 



\usepackage{epsfig}
\usepackage{multicol}
\usepackage{listings}
\usepackage{color,colortbl}







\theoremstyle{plain}
\numberwithin {equation}{section}
\newtheorem{theorem}{Theorem}[section]

\newtheorem{lemma}{Lemma}[section]
\newtheorem{corollary}{Corollary}[section]
\newtheorem{assumption}{Assumption}[section]

\theoremstyle{example}

\theoremstyle{definition}

\theoremstyle{remark}



\newcommand{\cov}{\mathrm{Cov}}

\newcommand{\lam}{\lambda}
\newcommand{\lamn}{\lam^{(n)}}
\newcommand{\Sig}{\bm{\Sigma}}
\newcommand{\X}{\mathbf{X}}
\newcommand{\Y}{\mathbf{Y}}
\newcommand{\tilY}{\tilde{Y}}

\newcommand{\imain}{I_{main}}
\newcommand{\iout}{I_{out}}
\newcommand{\tauif}{\tau_{i,1}}
\newcommand{\tauis}{\tau_{i,2}}
\newcommand{\tauifn}{\tau_{i,1}^{(n)}}
\newcommand{\tauisn}{\tau_{i,2}^{(n)}}

\newcommand{\sqtauis}{\sqrt{\tau_{i,2}}}

\newcommand{\U}{\mathbf{U}}
\newcommand{\Ud}{\U_{d}}
\newcommand{\Lam}{\bm{\Lambda}}
\newcommand{\Lamd}{\Lam_{d}}
\newcommand{\Sigd}{\Sig_{d}}

\newcommand{\clam}{c_{\lambda}}

\newcommand{\hatlam}{\hat{\lambda}}
\newcommand{\hatU}{\hat{U}}
\newcommand{\hatUbf}{\hat{\U}}

\newcommand{\Sbf}{\mathbf{S}}

\newcommand{\delmn}{\delta_{m}^{(n)}}
\newcommand{\deln}{\delta^{(n)}}

\newcommand{\haty}{\hat{y}}

\newcommand{\Z}{\mathbf{Z}}

\newcommand{\samcov}{\hat{\Sig}}
\newcommand{\dual}[1]{#1_{D}}
\newcommand{\eigval}[2]{\lam_{#1}(#2)}
\newcommand{\taun}[1]{\tau_{#1}^{(n)}}

\newcommand{\tilYsub}[2]{\tilde{Y}_{#1,(#2)}}
\newcommand{\tilZsub}[2]{\tilde{Z}_{#1,(#2)}}
\newcommand{\tilZ}{\tilde{Z}}
\newcommand{\A}{\mathbf{A}}
\newcommand{\B}{\mathbf{B}}

\newcommand{\hatu}{\hat{u}}
\newcommand{\hatuki}{\hat{u}_{ki}}
\newcommand{\dn}{\frac{d}{n}}
\newcommand{\rightarrowas}{\rightarrow_{a.s.}}
\newcommand{\delfrac}[2]{\frac{\deln_{#1}}{\deln_{#2}}}
\newcommand{\bigo}[1]{O \Big( #1 \Big)}
\newcommand{\littleo}[1]{o \Big( #1 \Big)}
\newcommand{\matroot}[1]{#1^{\frac{1}{2}}}
\newcommand{\matrootinv}[1]{#1^{-\frac{1}{2}}}
\newcommand{\hatLambf}{\hat{\Lam}}
\newcommand{\sumi}{\sum_{i=1}^{d}}
\newcommand{\sumj}{\sum_{j=1}^{n}}
\newcommand{\meani}{\frac{1}{d}\sum_{i=1}^{d}}
\newcommand{\meanj}{\frac{1}{n}\sum_{j=1}^{n}}
\newcommand{\checkYi}{\breve{Y}_i}

\newcommand{\maxunder}[1]{\underset{#1}{\max}}

\newcommand{\limunder}[1]{\underset{#1}{\lim}}
\newcommand{\checkYbf}{\breve{\Y}}
\newcommand{\checkY}{\breve{Y}}
\newcommand{\tr}{\mathrm{tr}}
\newcommand{\taud}{\tau^{(d)}}

\newcolumntype{P}{>{\raggedright\arraybackslash}p}
\definecolor{LightCyan}{rgb}{0.88,1,1}

\pagestyle{fancy}
\fancyhf{}
\fancyhead[l,lo]{\rightmark} 
\fancyhead[r,ro]{\thepage} 

\begin{document}
\title{Theory of high-dimensional outliers}
\author{Hyo Young Choi and J. S. Marron\\
Department of Medicine, University of Tennessee Health Science Center \\
Department of Statistics and Operations Research, University of North Carolina at Chapel Hill}
\date{}
\maketitle


\textbf{Abstract.}  
This study concerns the issue of high dimensional outliers which are challenging to distinguish from inliers due to the special structure of high dimensional space. We introduce a new notion of high dimensional outliers that embraces various types and provides deep insights into understanding the behavior of these outliers based on several asymptotic regimes. Our study of geometrical properties of high dimensional outliers reveals an interesting transition phenomenon of outliers from near the surface of a high dimensional sphere to being distant from the sphere. Also, we study the PCA subspace consistency when data contain a limited number of outliers.  
~\\

\textit{Keywords:} Principal components analysis; High dimensional outliers; RNA-seq data; Random matrix theory; High dimensional low sample size analysis;

\thispagestyle{empty}
\bigskip
\pagebreak
\setcounter{page}{1}
\section{Introduction}\label{ch_theory:motivation}
\noindent From a classical point of view, outliers have been considered as \emph{bad} cases that may confound the statistical analysis. In this case, one may think the data are contaminated by a few outliers and those should be down-weighted or potentially removed from the dataset. Much work in this case has been done. See \cite{hampel2011robust} and \cite{huber2011robust} for a good overview. On the other hand, there are situations where outliers can produce important and rich information. For example, aberrant observations of gene expression data can be highly related to important genetic phenomena such as mutations, abnormal splicing, and structural variations that are known to be strongly connected to cancer. In both cases, the study of outliers helps to better understand data. 

Roughly speaking, in low dimensional space, a data point is an outlier if it does not fit the distribution that a majority of the data points come from. However, this definition is more challenging for high dimensional data due to the ‘curse of dimensionality’, i.e. the phenomenon where the data points tend to be more apart from each other as the dimension increases. As discussed in Section \ref{ch_theory:geometry}, when $d \gg n$, \cite{Hall2005} showed that data points tend to lie near the surface of a high-dimensional sphere and that, more surprisingly, all pairwise distances of points are approximately equal and all pairwise angles are approximately perpendicular. These geometrical properties indicate that data points in high dimensional space are very sparse, and thus they might be considered as \textit{inliers}, which makes it challenging to distinguish outliers from them. Due to this curse of dimensionality, classical outlier detection methods such as distance-based or depth-based approaches \citep*{Barnett1974outliers,Hawkins1980,Stahel1981,Donoho1992,Liu1992,Zuo2000,Zuo2003,Dang2010} do not work well for high dimensional data.
 Over the last decade, several alternative outlier detection methods have been developed to tackle the challenge of high dimensionality. \citep{filzmoser2008outlier,ro2015outlier,Rousseeuw2016,ahn2018distance} However, there is no consensus on the definition of outliers and each method targets different types of outliers. In this paper, we introduce a new notion of high dimensional outliers that embraces various types of outliers and provides deep insights into understanding the behaviors of outliers in high dimensions. 

 Often, the classical large sample theory does not provide good approximations to high dimensional data. For example, many statistics such as Hotelling's $T^2$-statistic, generalized variances, multiple correlation coefficients, and various statistics for sphericity tests are asymptotically consistent under a classical asymptotic regime, but those asymptotics are no longer valid with large $d$ even $d < n$. To understand such different asymptotic behavior of high dimensional data, as mentioned earlier, tremendous efforts have been made over the last few decades under several different asymptotic regimes \citep{baik2006eigenvalues,jung2009pca,shen2016general,wang2013sphericity,paul2014random,yao2015large}. However, the studies on limiting properties of high dimensional outliers are still lacking. Under the new notion of outliers, we investigate the conditions under which outliers can be distinguished from inliers as well as the conditions under which such outliers can be asymptotically well captured by a low dimensional subspace produced by PCA. Our theoretical results extend the previous asymptotic studies for high dimensional data to the case where there are a small number of outliers. 

The remainder of this paper is organized as follows. In Section \ref{ch_theory:related_work}, we review related work. Section \ref{ch_theory:model} introduces a model for an underlying distribution possibly generating outliers. Some geometrical properties of high dimensional outliers are explored in Section \ref{ch_theory:geometry}. Theoretical aspects related to the asymptotic behavior of sample eigenvalues and eigenvectors when there are two different types of signals, outlier signals as well as main signals, are investigated in Section \ref{ch_theory:asymptotics}. Section \ref{ch_theory:toy_example} provides a toy example to illustrate the theoretical results. The proofs of the theorems are given in Section \ref{ch_theory:proofs}.

\section{Related work}\label{ch_theory:related_work}
\noindent Let $X$ be a $d$-dimensional random vector with mean vector $\mu$ and covariance matrix $\Sig$. Let $\lam_1 \geq \cdots \geq \lam_d$ be the $d$ ordered eigenvalues of $\Sig$ and $U_1, \cdots, U_d$ be the corresponding eigenvectors. Let $X_1, \cdots, X_n$ be observations on $X$. Denote the sample covariance matrix by $\Sbf = \frac{1}{n-1}\sumj (X_j-\bar{X})(X_j - \bar{X})^{T}$ and its ordered sample eigenvalues and eigenvectors by $\hatlam_1 \geq \cdots \geq \hatlam_d$ and $\hatU_1, \cdots, \hatU_d$, respectively. The asymptotic study of sample eigenvalues ($\hatlam_1 \geq \cdots \geq \hatlam_d$) and sample eigenvectors ($\hatU_1, \cdots, \hatU_d$) has an interesting history and developed roughly in three different asymptotic domains: the classical domain, the random matrix theory (RMT) domain, and the high dimensional low samples size (HDLSS). In each domain, different asymptotic theories have been established.

In the classical domain, \cite{girshick1939sampling} investigated the asymptotic properties of sample eigenvalues and eigenvectors in the case of all the eigenvalues of $\Sig$ being different. When the smallest $d-q$ eigenvalues of $\Sig$ are equal and the others are all different, \cite{lawley1953modified} investigated the asymptotic theories of sample eigenvectors. When $X_1, \cdots, X_n$ are from a multivariate normal distribution, \cite{anderson1963asymptotic} has given the asymptotic distribution of $\hatlam_1, \cdots, \hatlam_d$, $\hatU_1, \cdots, \hatU_d$ in the case of $\lam_1, \cdots, \lam_d$ having any multiplicities. The asymptotic study of the eigenstructure of the sample covariance matrix in the classical domain essentially relies on the fact that the population covariance matrix is well approximated by the sample covariance matrix when the sample size is large with dimension fixed. When the dimension is also large, however, this is no longer the case.  

In the RMT domain, these phenomena were explored in a large number of papers, e.g. \cite{marvcenko1967distribution,silverstein1995analysis,silverstein1995strong,bai1998no,baik2005phase,paul2007asymptotics,bai2012sample}. See also \cite{bai2008methodologies}, \cite{bai2010spectral}, \cite{paul2014random} and \cite{yao2015large} for useful overview. A well-known observation is that the empirical spectral distribution (ESD) of the sample covariance matrix converges almost surely to the Marcenko-Pastur distribution when the population covariance matrix is the identity and $d$ and $n$ proportionally grow to infinity. Combining the fact that the an eigenvalue is a continuous function of a matrix, this supports the idea that the sample covariance matrix is not a good estimate of the population covariance matrix for large dimensions. However, many data sets in high dimensions involve quiet different eigenvalues, for instance, a few largest of those are much larger than the other eigenvalues. To understand these phenomena, the \emph{spiked covariance model} was initially introduced by \cite{johnstone2001distribution} and extensively studied. \cite{baik2005phase} studied the conditions of the first $m$ population eigenvalues that provided the corresponding sample eigenvalues being separate from the other small eigenvalues under the spike covariance model. They proved a transition phenomenon: the limits of the extreme sample eigenvalues depend on the critical value $1+\sqrt{c}$, i.e. a  sample eigenvalue from a population eigenvalue that is greater than $1+\sqrt{c}$ is asymptotically isolated from the others, i.e. the \emph{bulk} eigenvalues. \cite{baik2006eigenvalues} extended the results of \cite{baik2005phase} to non-Gaussian variables and found that the limits of the extreme sample eigenvalues depend on the critical values $1+\sqrt{c}$ for the largest spike eigenvalues and on $1-\sqrt{c}$ for the smallest spike eigenvalues. \cite{bai2012sample} extended the results to a generalized spike covariance model that allows flexibility on the distribution of bulk population eigenvalues. The spike covariance model is closely related to the concept of small-rank perturbations, i.e. theories on perturbed random matrices. In a small-rank perturbation approach, convergence of the few largest sample eigenvalues and the corresponding sample eigenvectors are studied in \cite{benaych2011eigenvalues}. 

Note that underlying spike eigenvalues are constant in the classical domain and the RMT domain where the increasing sample size $n$ boosts the consistency. On the other hand, in the HDLSS domain, underlying spike eigenvalues are allowed to increase, which encourages the PCA consistency for increasing dimension $d$ and a fixed $n$ \citep{ahn2007high,jung2009pca,jung2012boundary,shen2016general}. \cite{jung2009pca} explored the asymptotic behaviors of the spike eigenvectors when the levels of spike eigenvalues increase at the rate $d^{\alpha}$. In the case of $\alpha > 1$, they showed that the spike eigenvectors are subspace consistent, i.e. the subspace spanned by the sample spike eigenvectors consistently estimates the subspace spanned by the underlying population spike eigenvectors, and is strongly inconsistent for $\alpha<1$, i.e. the angle between each sample eigenvector and the true one converges to 90 degrees. \cite{jung2012boundary} deeply explored the boundary case ($\alpha=1$) and showed the convergence in distribution of the first spike eigenvector under the normal assumption. \cite{shen2016general} have provided a general framework of the PCA consistency that nicely connected the existing results from different domains except for some boundary cases. 

In this paper, we deeply explore the behaviors of high dimensional outliers via geometrical representations in the HDLSS domain and asymptotic theories of sample eigenvalues and eigenvectors from the data containing a few outliers under the general framework studied in \cite{shen2016general}. A major interest is the consistent estimation of underlying outlier directions in which only a small number of outliers go. We will provide for each scenario a condition that allows achievement of the PCA individual consistency or subspace consistency.


\section{Model and Notations}\label{ch_theory:model}
\noindent In this section, we introduce a model that provides a new notion of high dimensional outliers. Figure \ref{fig:RNAseq_TP53} shows a motivating example with 30 normal RNA-seq data curves in grey color with two potential colored outliers. Each curve represents each observation in the genomic region around the gene TP53. The curves are read depth (or coverage), i.e. the number of reads aligned to each nucleoide, which are log10 read counts from RNA-seq experiments based on HNSCC (Head and Neck Squamous Cell Carcinoma) cancer tissue samples obtained from the TCGA Research Network. We use the terminology \emph{sample} to indicate a patient. Exons, highlighted by colored background (except for pink), are regions of a gene that are annotated as the part of the messenger RNA region. By contrast, introns, highlighted by a white background together with on colored pink, are regions of the gene that are expected to be \emph{spliced out}, i.e. not used in the RNA production. In the figure, the red and blue curves behave differently from the others in the sense that the red one retains an intron, as highlighted by the pink background, and the blue one skips several exons, as highlighted by the green background. Such abnormal splicing events are called \textit{intron retention} and \textit{exon skipping}, respectively. It has been observed that such events happen with a small chance at each gene, hence it makes sense to consider such samples as outliers.

The two red and blue outliers show clearly different structure from the other curves, which implies that they show different underlying signals that do not fit together with the majority of the data. At the same time, interestingly, some of the main structures of the two outliers are shared with most of the data. This example motivated us to consider two different types of underlying directions in the data space, together with variation in those directions in describing outliers. Two important types are \textit{outlier directions} that may lead to prominent high dimensional outliers and \textit{main directions} whose variation is shared among all data points including outliers. The new proposed model incorporating these two components is now introduced in three parts.

\begin{figure}[t]	\centering
	\includegraphics[scale=0.6]{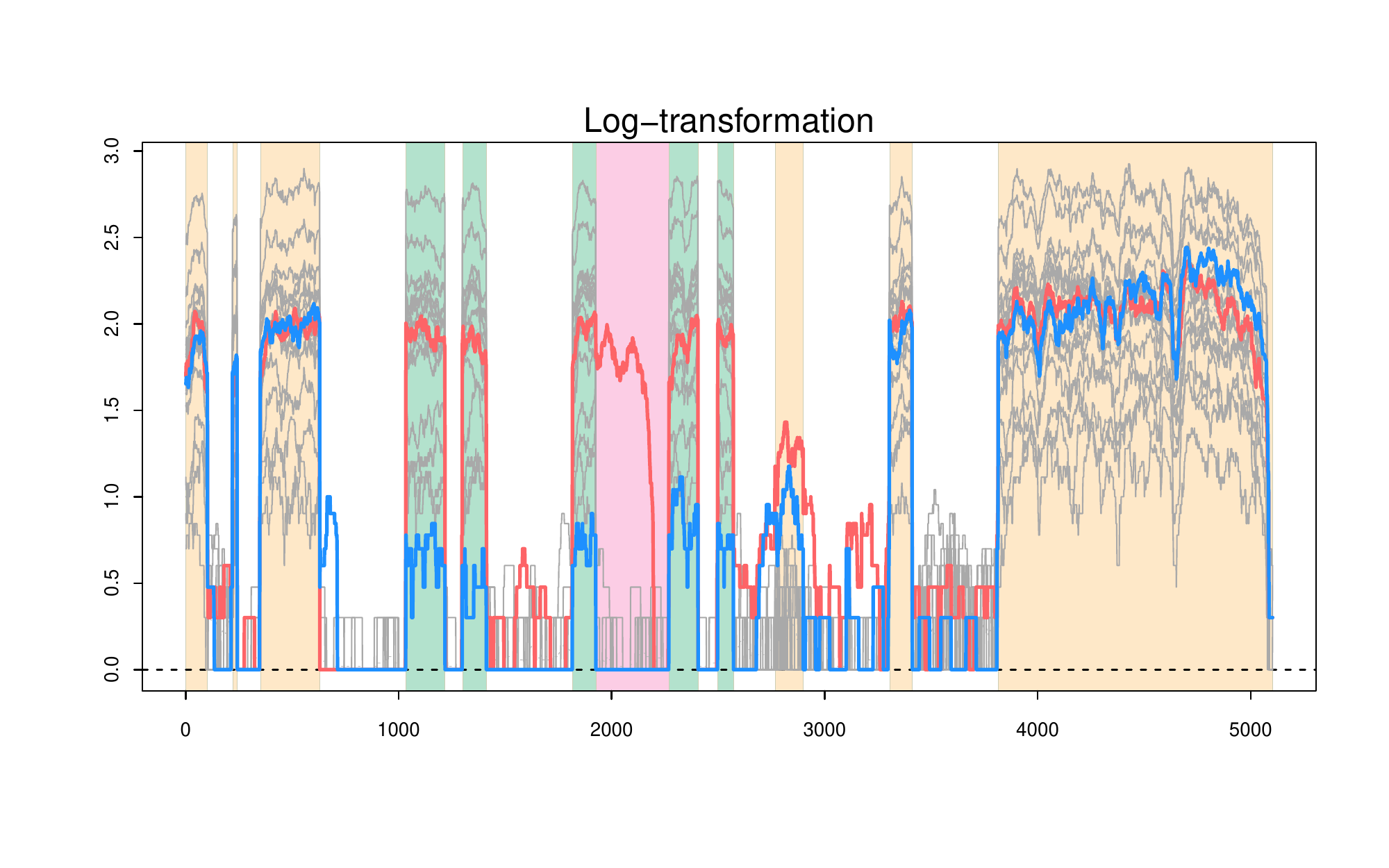}
	\caption[TP53 RNA-seq data]{The 30 RNA-seq observations for the gene TP53 are plotted on the log-scale. Exons are highlighted by colored background and introns are indicated using mostly white background. The red and blue curves indicate biologically important outliers and the other gray curves indicate normal observations.}
\label{fig:RNAseq_TP53}
\end{figure}

Part 1. The classical way of describing underlying variations of a random vector using PCA is discussed in this paragraph. Let $X$ be a random vector distributed as a $d$-dimensional multivariate normal distribution, $N_d (0, \Sigd)$. The spectral decomposition of the population covariance matrix is 
\begin{eqnarray*}
	\Sig_d = \Ud \Lamd \Ud^{T}
\end{eqnarray*} 
where $\Ud = [u_1, \cdots, u_d]$ contains the orthonormal eigenvectors of $\Sigd$ in its columns and $\Lamd = \mathrm{diag}(\lam_1, \lam_2, \cdots, \lam_d)$ is a diagonal matrix with the corresponding non-negative eigenvalues. Then, a random vector from $N_d (0, \Sigd)$ can be expressed as 
\begin{eqnarray*}
	X = \Ud \Lamd^{1/2}Z = \Ud Y
\end{eqnarray*}
where $Z \sim N_d (0, I_d )$ and $Y \sim N_d (0, \Lamd )$. That is, $X$ is a linear combination of $U_i$ with random coefficients $y_i$ from $N(0, \lam_i )$, i.e., 
\begin{eqnarray}\label{eq:linearsum}
	X= \sum_{i=1}^{d} y_i U_i,~~~~y_i \sim N(0, \lam_i ).
\end{eqnarray} 
In the terminology of PCA, the $y_i$ are the \textit{principal components}, i.e. the scores or projection coefficients \citep*{jolliffe2002}. Intuitively, if $X$ involves large $y_i$ for some $i$, then the direction $U_i$ is an important direction of variation of the underlying distribution of $X$, whereas if $y_i \approx 0$, $X$ does not feel strongly the direction $U_i$. 

Part 2. Distributions for modeling outliers are now considered. Based on the intuition behind the principal components, an outlier can be viewed as an observation that goes strongly in some directions that the bulk of data points do not. Denote one of those directions by $U_i^*$ and the corresponding random coefficient by $y_{i}^{*}$. Then outliers that go in the direction $U_i^*$ have large $y_i^*$'s whereas the other data points have small $y_i^*$'s in (\ref{eq:linearsum}). We model this underlying variation of a random coefficient $y_i^*$ by a scale mixture distribution with two different variances, $\tauis \gg  \tauif > 0$, i.e., 
\begin{eqnarray}\label{eq:mixture0}
	y_i^* \sim \left\{ \begin{array}{l l}
					\sqrt{\tauif} z_{i}, & ~~\mathrm{w.p.}~~1-w_{i} \\ 
					\sqtauis z_{i}, & ~~\mathrm{w.p.}~~ w_{i},
				    \end{array} \right.
\end{eqnarray}
where the $z_i$'s are i.i.d random variables with mean zero and variance one and $ 0 \leq w_i \leq 1$, with $w_i \approx 0$. The first part of the mixture distribution with the smaller variance, $\tauif$, describes the behavior of the majority of data vectors with little variation in the direction $U_i^*$. The second part of the mixture distribution with the larger variance, $\tauis$, corresponds to outliers, and so we assume that $w_i$ is small, e.g. less than $0.05$. This mixture model well reflects an underlying mechanism generating outliers in the sense that \enquote{one person's noise could be another person's signal}, as pointed out in \cite{kamber2001data}. 

Part 3. A new model for an underlying distribution embracing a small set of outliers is introduced based on the classical setting (\ref{eq:linearsum}) together with the distribution (\ref{eq:mixture0}) beyond the Gaussian models. Let $\X = [X_1, \cdots, X_n ]$ be a data matrix whose columns are independent  observation vectors distributed as a $d$-dimensional (perhaps non-Gaussian) multivariate distribution with a small number of aberrant vectors whose signals are different from the majority of the data. Let $\{U_i\}_{1\leq i \leq d}$ be a set of underlying orthogonal vectors some of which are responsible for the potential outliers. Note that these vectors do not need to be the eigenvectors of the underlying covariance matrix. In the spirit of (\ref{eq:linearsum}), an observation vector $X_j$ can be expressed as a linear combination of the orthonormal direction vectors, $\{U_i\}_{1\leq i \leq d}$, whose coefficients are independent random variables distributed as different mixture distributions, i.e.
\begin{eqnarray}\label{eq:mixture}
X_j = \sum_{i=1}^{d} y_{ij}U_i, \mathrm{~~where~~}	y_{ij} \sim \left\{ \begin{array}{l l}
					\sqrt{\tauif} z_{ij}, & ~~\mathrm{w.p.}~~1-w_{i} \\ 
					\sqtauis z_{ij}, & ~~\mathrm{w.p.}~~ w_{i},
				    \end{array} \right.
\end{eqnarray}
where the $z_{ij}$'s are assumed to be i.i.d. random variables with mean zero, variance one, and bounded fourth moment. Then, the random variables $\{y_{ij}\}_{1 \leq j \leq n}$ with $w_i > 0$ model how the direction $U_i$ as an outlier component can generate outliers. Also, we will use $w_i = 0$ for other directions especially main components, which allows flexibility to include the classical way of describing the variation from underlying directions as in (\ref{eq:linearsum}). To distinguish the two components, we let $\imain$ denote a set of dimension indices that correspond to main components and $\iout$ denote outlier components. That is, $\iout = \{1 \leq i \leq d ~|~ w_i > 0\}$ and $\imain = \{ 1 \leq i \leq d ~|~ w_i=0 \} = \{ 1, \cdots, d\}\backslash \iout $. Also, we denote the sample indices that are outlying in each outlier component, indexed by $i \in \iout$, by $s_{i}=\{1 \leq j \leq n ~|~ y_{ij}=\sqrt{\tauis}z_{ij} \}$. 

Under the model (\ref{eq:mixture}), outliers are allowed to share important features or background noise with normal data points. The model also allows an outlier to be associated with several outlier components, which offers flexibility in modeling the nature of outliers. Under this setting, a sample vector from (\ref{eq:mixture}) can be viewed as a random vector from a complicated mixture distribution whose components have different covariance structures. 

As discussed earlier, still there is no consensus definition for outliers. Every procedure may target its own informal definition for outliers based on various goals. Here, we describe several types of outliers that are commonly used in various applications as special cases of the proposed model in (\ref{eq:mixture}).
\begin{itemize}
    \item Variable-specific outliers: This type of outlier is different from the bulk of the data only at single variables. If an observation is an outlier with respect to the original variables, then it is usually extreme on these variables. Assuming there are $d$ variables in the model, each sample can be modeled by (\ref{eq:mixture}) with $U_i=e_i$ for $i=1, \cdots, d$. Here, $e_i$ is a unit vector with 1 for the $i$th entry and 0 for the others. Then, an outlier $X_j$ in the $m$-th variable can be described by $\tau_{m,2}>\tau_{m,1}$ and an underlying outlier proportion $w_m$.
    \item Scale mixture outliers: The outliers in this category exhibit a much different abberation, across all variables simultaneously, and are more scattered than the majority of data, and thus they are also known as \emph{scatter outliers} \citep{filzmoser2008outlier}. Let $X_j \sim N_d(0, \sigma_{1}^2 \Sig)$ with probability $1-p$ and $ N_d(0, \sigma_{2}^2 \Sig)$ with a small probability $p$ and $\sigma_2^2 \gg \sigma_1^2$. This scale mixture model is a special subset of the model (\ref{eq:mixture}) with $w_i=p, \tauif = \sigma_1^2, \tauis = \sigma_2^2$ for all $i=1, \cdots, d$, where the $U_i$'s are a set of the orthogonal vectors, e.g. the eigenvectors of $\Sig$. Additionally, the $\iout$ will include every index, $\imain=\emptyset$, and $s_1 = s_2 = \cdots = s_d$. That is, 
    \begin{eqnarray*}
        X_j = \left\{ \begin{array}{l l}
                                    \sum_{i=1}^{d} y_{ij}U_i, \mathrm{~~where~~}	y_{ij}=\sigma_1 z_{ij}, &~~\mathrm{w.p.}~~1-p \\
                                    \sum_{i=1}^{d} y_{ij}U_i, \mathrm{~~where~~}	y_{ij}=\sigma_2 z_{ij}, &~~\mathrm{w.p.}~~p.
                                \end{array} \right.
    \end{eqnarray*}
    \item Shifted outliers: The shifted outliers are those that are shifted globally to a common direction \citep{filzmoser2008outlier,ro2015outlier,dai2016directional}. Often, these outliers share most of the variation with the the bulk of the data, but present abnormally high or low overall pattern, which is typically described by the mean vector denoted by $\mu$. Let $X_j$ be independent random vectors from $a_j \mu + Z_j$, where $Z_j \sim N_d(0, \Sigma)$, $a_j \sim N(0, \sigma_1^2)$ with probability $1-p$ and $N(0, \sigma_2^2)$ with probability $p$, and $Z_j$ and $a_j$ are independent. Assuming $\sigma_1 < \sigma_2$ and a small $p$, the random variable $a_j$ describes how a small fraction of data points may be shifted. Define one of the underlying vectors, say $U_1$, to be the normalized mean vector, that is, $U_1=\mu/\Vert \mu \Vert$, and the other underlying vectors to be orthogonal to each other. Then, the variation from the $U_1$ for normal samples and outliers are respectively $\sigma_1^2 \Vert \mu \Vert^2 + U_1^{T}\Sig U_1$ and $\sigma_2^2 \Vert \mu \Vert^2 + U_1^{T}\Sig U_1$. Thus, each data object can be modeled by 
      \begin{eqnarray*}
            X_j = \sum_{i=1}^{d} y_{ij}U_i, \mathrm{~~where~~}	y_{1j} \sim \left\{ \begin{array}{l l}
					N(0, \sigma_1^2 \Vert \mu \Vert^2 + U_1^{T}\Sig U_1) , & ~~\mathrm{w.p.}~~1-p \\ 
					N(0, \sigma_2^2 \Vert \mu \Vert^2 + U_1^{T}\Sig U_1) , & ~~\mathrm{w.p.}~~ p,
				    \end{array} \right.
    \end{eqnarray*}
    with the other $y_{ij}$ from $N(0, U_i^{T}\Sig U_i)$ for $i=2, \cdots, d$.
    \end{itemize}

\section{Geometrical representation}\label{ch_theory:geometry}
\noindent It is important to understand the behavior of outliers in high dimensional space. Roughly speaking, the distance between data points becomes heavily dominated by noise as dimension increases, resulting in a sparse data set where outliers are less distinguishable. \cite{zhou2016visualization} studied the case where some outliers are too close to each other due to some common factors, e.g. family members, and thus unduly affect the conventional PCA and some robust methods. However, studies on the behavior of high dimensional outliers in a systematic manner are still lacking. This section explores the geometrical features of the high dimensional outliers based on the model (\ref{eq:mixture}). 

It is of great interest to understand when outliers in high dimensions may deviate from the majority and when they may not. Intuitively, if $\tauis$ in some outlier components are dramatically larger than $\tauif$, then the relevant outliers are more likely to be separated from the bulk of the data. By contrast, if $\tauis$ do not differ much from $\tauif$, the corresponding outliers are expected to be harder to distinguish. As discussed below, an interesting observation in high dimensional data is that if an outlier is involved in a large fraction of outlier directions, $d$ encourages the separability of the outlier from the other normal data points even when $\tauis$ is not substantially large. On the other hand, if an outlier is involved in a limited number of outlier directions, $d$ discourages the separability even for relatively large $\tauis$'s. We study these phenomena using the geometrical representation of high dimensional outliers in the HDLSS context explored by \cite{Hall2005} and identify a condition when outliers may be distinguishable in such high dimensions. 

We consider a simple scenario where data come from (\ref{eq:mixture}) with $\tauif=\sigma^2$ and $\tauis=\taud$ for all $i$ under the normality assumption. In this section, we index the variation for outlier components by $d$, $\taud$, as an indication of increase with dimension. Then, our model can be expressed as 
\begin{eqnarray}\label{eq:simple_scenario}
		y_{ij} = \left\{ \begin{array}{l l}
					\sigma z_{ij}, & \mathrm{w.p.}~~1-w_{i} \\ 
					\taud z_{ij}, & \mathrm{w.p.}~~ w_{i},
				    \end{array} \right. ~\mathrm{for}~~i \in \iout~~\mathrm{and}~~y_{ij} = \sigma z_{ij}~~\mathrm{for}~i \notin \iout.
\end{eqnarray} 
Consider a non-outlier point $X_j$ from (\ref{eq:simple_scenario}) which can be expressed as $X_j = \sum_{i=1}^{d} \sigma z_{ij} U_i$ where the $U_i$ are orthonormal underlying eigenvectors. As $d$ increases, it follows by a law of large numbers that its squared Euclidean distance scaled by $d$ converges to the constant $\sigma^2$ in the sense that
\begin{eqnarray}\label{eq:non-outlier-norm}
	\frac{1}{d} \Vert X_j \Vert^2 & = & \frac{1}{d} \sum_{i=1}^{d} \sigma^2 z_{ij}^2 \nonumber \\
		& \rightarrow & \sigma^2
\end{eqnarray}
almost surely. Then, we might fairly say that a non-outlier point $X_j$ lies approximately on the surface of a $d$-variate sphere, of radius $(\sigma^2 d)^{1/2}$, as $d \rightarrow \infty$. Similarly, we can obtain limiting behavior of distances between pairs of non-outlier points. The  distance between two non-outlier points $X_{j}$ and $X_{l}$ is approximately equal to $(2\sigma^2 d)^{1/2}$ as $d \rightarrow \infty$, in the sense that 
\begin{eqnarray}\label{eq:non-outlier-dist}
	\frac{1}{d} \Vert X_j - X_l \Vert^2 & = & \frac{1}{d} \sum_{i=1}^{d} \sigma^2 (z_{ij}- z_{il})^2 \nonumber \\
		& \rightarrow & 2\sigma^2
\end{eqnarray}
where the convergence is almost sure. These asymptotic results match with the results in \cite{Hall2005}. As described in their paper, application of (\ref{eq:non-outlier-dist}) to each pair $(j, l)$ of non-outliers, and scaling all distances by the factor $d^{-1/2}$, shows that they asymptotically construct a polyhedron where each edge is of length $(2\sigma^2)^{1/2}$ and the vertices are the $m$ non-outliers.

Similarly, we now explore the behavior of outliers in high dimensions. An outlier point $X_{j'}$ can be expressed as 
\begin{eqnarray*}
    X_{j'} = \sum_{i\in \iout^{j'}}\sqrt{\taud}z_{ij'}U_i + \sum_{i\notin \iout^{j'}}\sigma z_{ij'}U_i     
\end{eqnarray*}
where $\iout^{j'}$ is an index set for outlier components related to $X_{j'}$. Let $K_{j'}^{(d)} = |\iout^{j'}|$ be the cardinality of the set $\iout^{j'}$ for each $d$ and $p_{out}^{j'} = \limunder{d \rightarrow \infty} \frac{K_{j'}^{(d)}}{d}$ be the fraction of the outliers components for a large $d$. The deviation of $X_{j'}$ from the majority depends on the levels of $K_{j'}^{(d)}$, that is, $p_{out}^{j'}>0$, $p_{out}^{j'}=0$ with $K_{j'}^{(d)} \rightarrow \infty$, and $p_{out}^{j'}=0$ with $K_{j'}^{(d)}$ fixed. Each case requires the different levels of $\taud$ as will be discussed below. 
 
Let us first consider the case of $p_{out}^{j'}>0$ with $\tau = \limunder{d \rightarrow \infty} \taud$. It follows that if a law of large numbers applies to its squared distance divided by $d$, then
\begin{eqnarray}\label{eq:outlier-norm}
	\frac{1}{d} \Vert X_{j'} \Vert^2 & = &  \frac{1}{d}\sum_{i\in \iout^{j'}}\taud z_{ij'}^2 + \frac{1}{d}\sum_{i\notin \iout^{j'}}\sigma^2 z_{ij'}^2 \nonumber \\
	& = & \frac{K_{j'}^{(d)}}{d}\frac{1}{K_{j'}^{(d)}}\sum_{i\in \iout^{j'}} \taud z_{ij'}^2 + \frac{d-K_{j'}^{(d)}}{d}\frac{1}{d-K_{j'}^{(d)}}\sum_{i\notin \iout^{j'}}\sigma^2 z_{ij'}^2 \nonumber \\
	& \rightarrow & p_{out}^{j'} \tau + (1-p_{out}^{j'})\sigma^2
\end{eqnarray}
almost surely as $d \rightarrow \infty$. This implies that an outlier point $X_{j'}$ is approximately of distance $(\sigma^2 d + p_{out}^{j'}(\tau - \sigma^2)d)^{1/2}$ from the origin. Also, the distance between an outlier $X_{j'}$ and a non-outlier $X_{j}$ divided by $d^{1/2}$ converges almost surely to $(p_{out}^{j'} (\tau-\sigma^2) + 2\sigma^2)^{1/2}$ as $d \rightarrow \infty$:
\begin{eqnarray}\label{eq:outlier-dist}
	\frac{1}{d} \Vert X_{j} - X_{j'} \Vert^2 & = & \frac{1}{d} \sum_{i\in \iout^{j'}}(\sigma z_{ij} - \sqrt{\taud}z_{ij'})^2 +  \frac{1}{d}\sum_{i\notin \iout^{j'}}\sigma^2 (z_{ij} - z_{ij'})^2 \nonumber \\
		& \rightarrow & p_{out}^{j'} (\tau-\sigma^2) + 2\sigma^2.
\end{eqnarray}
Therefore, a larger $p_{out}^{j'}$ or a larger $\tau$ help to better separate the outlier $X_{j'}$ from non-outliers provided that $\tau>\sigma^2$ and $p_{out}>0$. In particular, this geometrical property shows that even when $\tau$ is not much bigger than $\sigma^2$, good separability still follows when $p_{out}^{j'}$ is sufficiently large for high dimensions whereas it tends to be less successful in low dimensions \citep{filzmoser2008outlier}.

The type of scale mixture outliers introduced in Section \ref{ch_theory:model} is a special example of this case with $\sigma_1^2 = \sigma^2$ and $\sigma_2^2 = \tau$. For this particular type, all the resulting outliers have $p_{out}=1$, which together with (\ref{eq:non-outlier-norm}) and (\ref{eq:outlier-norm}) leads to two $d$-variate spheres of different radii: a sphere of radius $(\sigma^2 d)^{1/2}$ on the surface of which the non-outliers approximately lie, and another sphere of radius $(\tau d)^{1/2}$ for the outliers. This geometrical representation is also associated with the unique spectrum limit of the sample covariance matrix of high dimensional scale mixture distributions as studied in \cite{li2018structure}. They showed that the limit of the ESD from the scale mixture distribution can be viewed as a mix of the two separate ESD limits relevant to each mixture component, and the separation of these two limits becomes more distinct for a larger ratio of $\frac{d}{n}$. Roughly speaking, the part of the spectrum limit containing large eigenvalues is associated with the larger sphere of radius $(\tau d)^{1/2}$ and the other part involving smaller eigenvalues is associated with a smaller sphere of radius $(\sigma^2 d)^{1/2}$. 
 
So far, we have observed that $d$ encourages the geometrical separability of an outlier $X_{j'}$ if $p_{out}^{j'}>0$. However, this is no longer the case for $p_{out}^{j'}=0$ because the terms $p_{out}^{j'} \tau$ and $p_{out}^{j'}(\tau-\sigma^2)$ in  (\ref{eq:outlier-norm}) and (\ref{eq:outlier-dist}), respectively, disappear for large $d$, which discourages the separability. In this more challenging situation, we need a $\taud$ much bigger than $\sigma^2$ to approximately models the separability. So here we let $\taud$ increase as $d$ increases.  As mentioned earlier, the case with $p_{out}^{j'}=0$ is further divided into two cases where $K_{j'}^{(d)}$ increases as $d$ increases and where $K_{j'}^{(d)}$ is fixed. Let us first explore the case with increasing $K_{j'}^{(d)}$. We model the idea of a stronger outlier as
\begin{eqnarray}\label{eq:geometry_condition}
	\frac{K_{j'}^{(d)}\taud}{d} \rightarrow r_{j'}~~~~\mathrm{as}~d \rightarrow \infty.
\end{eqnarray}
Then, it is easy to show $\frac{1}{d} \Vert X_{j'} \Vert^2 \rightarrow  r_{j'} + \sigma^2$ and $\frac{1}{d} \Vert X_{j} - X_{j'} \Vert^2  \rightarrow  r_{j'} + 2\sigma^2$ as $d \rightarrow \infty$. This indicates that $r_{j'}$ plays an important role in separating $X_{j'}$ from non-outliers geometrically. If $r_{j'}$ is too small, and in particular if it equals $0$, then the data points in the sample including outliers asymptotically behave as a regular data set with the absence of outliers. On the other hand, if the $r_{j'}$ is large enough, the outlier $X_{j'}$ tends to be distinguished from the sphere on the surface of which the majority of data points spread out. 

The results above hold for increasing $K_{j'}^{(d)}$ as $d \rightarrow \infty$, for fixed sample size $n$. For the case of a limited number of outlier directions, i.e. $K_{j'}^{(d)}=K_{j'}$, a law of large numbers may not be applicable, and rather we employ the convergence in distribution. Then, we have $ \frac{1}{d} \Vert X_{j'} \Vert^2 \rightarrow_{d} \frac{1}{K_{j'}}\sum_{i \in \iout^{j'}} r_{j'} z_{ij'}^2 + \sigma^2$ and $\frac{1}{d} \Vert X_{j} - X_{j'} \Vert^2 \rightarrow_{d} \frac{1}{K_{j'}}\sum_{i \in \iout^{j'}} r_{j'} z_{ij'}^2 + 2\sigma^2$. Still, we see that the level of $r_{j'}$ determines the separability of an outlier from the other normal data points. But here it is good to mention that $r_{j'}$ becomes the limit of $\frac{\taud}{d}$, which only depends on the level of $\taud$, because we fix the $K_{j'}$.

To sum up, our study in this section enables understanding of the transition phenomenon of high dimensional outliers from near the surface of a high dimensional sphere to being distant from the sphere. Our results indicate that there are two factors affecting this transition which are the proportion of outlier components involved in an outlier and the signals of those outlier directions. 

\section{PCA consistency}\label{ch_theory:asymptotics}

\noindent In a spike covariance model, a fixed number of population eigenvalues are assumed to be much larger than the others. This provides an important sense in which the signals corresponding to large population eigenvalues are consistently estimated by PCA under some conditions that depend on various asymptotic domains \citep*{shen2016general}. We employ the same concept of a spike covariance model here. Let $K$ be the total number of different spike components among the covariance matrices in the mixture components. For convenience, we refer to $\{U_i \}_{1 \leq i \leq K}$ as \textit{spike directions} and $\{ U_i \}_{K+1 \leq i \leq d}$ as \textit{non-spike directions}. The non-spike components are often considered as noise. In a modification of the definition in Section \ref{ch_theory:model}, denote the index sets for outlier spike components and main spike components by $\iout = \{ 1 \leq i \leq K | w_i > 0\}$ and $\imain = \{1, \cdots , K \} \backslash \iout $, respectively. That is, $\{ U_i \}_{i \in \iout}$ is the set of outlier spike directions and $\{ U_i \}_{i \in \imain}$ is the set of main spike directions. The inherent variation derived in each direction $U_i$ can be expressed as $\lam_i = (1-w_i)\tauif + w_i \tauis $ by the mixture distribution in (\ref{eq:mixture}) and such $\lam_i$'s are indeed the population eigenvalues corresponding to the direction $U_i$. This is because the covariance matrix of $X_j$ from (\ref{eq:mixture}) can be written as $\Sig = \cov(X_j) = \cov(\U y_j) = \U \cov(y_j) \U^{T}$ where $y_j = (y_{1j}, \cdots, y_{dj})^{T}$. Due to the independence of $\{y_{ij}\}_{1 \leq i \leq d}$, $\cov(y_j)$ is a diagonal matrix whose entries are $var(y_{ij})=(1-w_i)\tauif + w_i \tauis$, and thus the $\lam_i$'s are the eigenvalues of $\Sig$ by the eigenvalue decomposition.

Let $X_1, \cdots, X_n$ be observations from (\ref{eq:mixture}) with the $K$ spike components as described above. Denote the sample covariance matrix by $\hat{\Sig} = \frac{1}{n}\X \X^{T} = \frac{1}{n}\sum_{j=1}^{n}X_j X_j^{T}$ and its eigenvalue decomposition by $\hat{\Sig}=\hatUbf \hatLambf \hatUbf^{T} $ with $\hatUbf=[\hatU_1, \cdots, \hatU_d]$ and $\hatLambf=\mathrm{diag}(\hatlam_1, \cdots, \hatlam_d)$ where $\{ (\hatlam_k , \hatU_k ): k=1, \cdots, d \}$ are the pairs of eigenvalues and eigenvectors of $\hat{\Sig}$ such that $\hatlam_1 \geq \hatlam_2 \geq \cdots \geq \hatlam_d $. In this section, asymptotic properties of $\hatlam_1, \cdots, \hatlam_d$ and $\hatU_1, \cdots, \hatU_d$ are analyzed under the general framework developed by \cite{shen2016general}. As discussed in Section \ref{ch_theory:related_work}, this general framework includes several previously studied domains as special cases and allows one to understand interesting connections among the various domains. This section provides parallel asymptotic results for data from a complicated mixture distribution (\ref{eq:mixture}) and enables understanding of the behavior of outlier components in high dimensions. The main contribution of our theorems compared to the existing theories lie in that data observations do not follow the same distribution, which mean the well-known theories must be extended. 

We consider increasing sample size $n$, increasing dimension $d$, and increasing spike signals.  As an indication of increasing spike signals, we let $\lam_i$, $\tauif$, and $\tauis$ be sequences indexed by $n$, that is, $\lamn_i$, $\tauifn$, and $\tauisn$. Consider the $M+1$ tiers where the first $K$ eigenvalues, $\{ \lamn_i \}_{1\leq i \leq K}$, are grouped such that $q_m$ eigenvalues fall into the $m$-th tier where $\sum_{m=1}^{M} q_m = K$ and the rest of the eigenvalues are all grouped into the $M+1$-th tier. Define $q_0 = 0, q_{M+1} = d - K$, and the partial sums $p_m = \sum_{l=0}^{m}q_l$. Then, the index set of the eigenvalues in the $m$-th tier can be written as
	\begin{eqnarray*}
		H_m = \big\{ p_{m-1} +1, p_{m-1} +2, \cdots, p_{m-1} + q_m \big\}~~~\mathrm{for}~~m=1, \cdots, M+1.
	\end{eqnarray*}
Denote a linear subspace spanned by the components in the $m$-th tier by $S_m = \mathrm{span} \{ U_i , i \in H_m \}$ for $m = 1, \cdots, M+1$.

The following assumptions provide the conditions for the variances, $\tauifn$ and $\tauisn$, of the underlying mixtures. Several different conditions are assumed for main spike signals, outlier spike signals, and noise signals, which helps to distinguish spike components from non-spike components. There are two types of noise in our model. One type is noise for all data points that correspond to the non-spike components in the model. By contrast, the other type is noise for the majority but a signal for a few observations. The latter type of noise is modeled by the small variance part in the outlier components. The following two assumptions illustrate the variances for these two types of noise.
\begin{assumption}\label{assume:nonspike}
$ \limunder{n \rightarrow \infty} \tauifn = \limunder{n \rightarrow \infty} \tauisn = c_{\lam}$~~ for $i \in H_{M+1}$.  
\end{assumption}
\begin{assumption}\label{assume:tau1_out}
$\limunder{n \rightarrow \infty} \tauifn = c_{\lam}$~~ for $i \in \iout$.
\end{assumption}
In a spike covariance model, noise signals are described in non-spike components and the corresponding underlying eigenvalues often are assumed to be constant for modeling white noise. This helps the bulk eigenvalues corresponding to the noise possess some known asymptotic properties. For instance, the distribution of the bulk eigenvalues converges to some well-known distributions, e.g. the Marcenko-Pastur law or the semi-circular law, and the extreme eigenvalues (the smallest and largest eigenvalues) are also known to be consistent to some values or asymptotically follow the Tracy-Widom distribution \citep{marvcenko1967distribution,bai1988convergence,bai1988necessary,bai1993,johnstone2001distribution}. In the same spirit, Assumption \ref{assume:nonspike} describes the asymptotically equivalent noise signals for non-spike directions $\{U_i\}_{i \in H_{M+1}}$. Eventually, the underlying eigenvalues $\{\lamn_i\}_{i > K}$ are all equal to $c_{\lam}$ for large $d$. Assumption \ref{assume:tau1_out} describes noise variances ($\tauifn$) for the outlier spike components. Since the outlier spike components are nothing but noise for the majority of the data, the same level of variation assumed for the non-spike components can be assumed. Thus, the noise variances for outlier spike directions are also asymptotically equal to $c_{\lam}$. This nicely connects the outlier model with the null model, i.e. the case with no outlier spike components, in the sense that the outlier components will merge with non-spike noise components. 

In contrast to noise signals, we allow spike signals to be increasing in $n$. The intensity of each spike component is determined by the underlying variation that each component is involved in, which is equivalent to its corresponding eigenvalue. For large $n$, the underlying eigenvalues, $\lamn_i$, are simply $\tauifn$ for $i \in \imain$ whereas, for $i \in \iout$, the eigenvalues are $w_i \tauisn$ because variation from the larger variance component $\tauisn$ dominate variation from the smaller variance component $\tauifn$. The PCA consistency strongly depends on the magnitudes of spike eigenvalues, which are specified in a systematic manner in the following assumptions. Let $ \delmn $ for $m=1, \cdots, M$ be sequences of constant values for index $n$. 
\begin{assumption}\label{assume:spike}
$\limunder{n \rightarrow \infty} \frac{\tauifn }{\delmn} = 1$ for $i \in H_m \cap \imain $ and $\limunder{n \rightarrow \infty} \frac{\tauisn}{w_i\delmn} = 1$ for $i \in H_m \cap \iout $, $m= 1, \cdots, M$.
\end{assumption}
\begin{assumption}\label{assume:tier_rates}
As $n \rightarrow \infty$, $\delta_1^{(n)} \succ \delta_2^{(n)} \succ \cdots \succ \delta_M^{(n)} \succ \lamn_{K+1}$ where $a_n \succ b_n$ implies $\limunder{n \rightarrow \infty} \frac{a_n}{b_n}>1$.
\end{assumption}
Assumption \ref{assume:spike} allows the components in the same tier to share asymptotically equivalent eigenvalues. We further assume different limiting coefficients for different tiers in Assumption \ref{assume:tier_rates}, which enables the characterization of the $M$ subspaces spanned by the directions in each tier.

Under Assumptions \ref{assume:nonspike}-\ref{assume:tier_rates}, we now investigate the asymptotic properties of the sample eigenvalues and eigenvectors. Even though we assume a complicated mixture distribution for the underlying structure and thus the data are not i.i.d., we obtain parallel asymptotic results to those in \cite{shen2016general}. This is because although observations are not from an identical distribution, they are allowed to share the same underlying eigenvectors from the model (\ref{eq:mixture}). Then, we have a simple integrated covariance matrix $\Sig$ so that a spike covariance model can be employed even when data come from multiple distributions. 

In general, the strength of underlying spike signals and increasing sample size $n$ encourage PCA consistency whereas increasing dimension $d$ discourages consistency. When the underlying spike signals in the $m$-th tier with increasing $n$ are asymptotically strong enough to prevail over the dimension $d$ in the sense that $\frac{d}{n\deln_{m}} \rightarrow 0 $, it follows that the estimates of the eigenvectors are subspace consistent in the $m$-th tier and the estimates of the eigenvalues are consistent as well. Theorems \ref{thm1:eigenvalue} and \ref{thm2:eigenvector} demonstrate such asymptotic behavior in a concrete manner under different scenarios. 

\begin{theorem}\label{thm1:eigenvalue}
Under Assumptions \ref{assume:nonspike}-\ref{assume:tier_rates}, 
\begin{itemize}
    \item[(a)] if $\frac{d}{n\delta_{M}^{(n)}} \rightarrow 0 $, then
    \begin{itemize}
        \item[(i)] for $i \leq K$, $\frac{\hat{\lam}_i}{\lamn_i} \rightarrow_{a.s.} 1$ where $\lamn_i=\tauifn$ for $i \in \imain$ and $\lamn_i=w_i\tauisn$ for $i \in \iout $; 
        \item[(ii)] for $i>K$, 
        \begin{itemize}
            \item[$\bullet$] if $0<c<\infty$, $c_{\lam}(1-\sqrt{c})^2 \leq \hatlam_{n \wedge d} \leq \hatlam_1 \leq c_{\lam}(1+\sqrt{c})^2$ a.s.;
            \item[$\bullet$] if $c=\infty$, $\frac{n\hatlam_i}{d} \rightarrow_{a.s.} c_{\lam}$;
            \item[$\bullet$] if $c=0$, $\hatlam_i \rightarrow_{a.s.} c_{\lam}$;
        \end{itemize}
    \end{itemize}
    \item[(b)] if $\frac{d}{n\delta_{h}^{(n)}} \rightarrow 0 $ where $1 \leq h < M$ and $\frac{d}{n\delta_{h+1}^{(n)}} \rightarrow \infty $, then
    \begin{itemize}
        \item[(i)]  for $i  \leq p_h$, $\frac{\hat{\lam}_i}{\lamn_i} \rightarrow_{a.s.} 1$ where $\lamn_i=\tauifn$ for $i \in \imain$ and $\lamn_i =w_i\tauisn$ for $i \in \iout $;
        \item[(ii)] for $i>p_h$, $\frac{n\hatlam_i}{d} \rightarrow_{a.s.} c_{\lam}$.
    \end{itemize}
\end{itemize}
\end{theorem}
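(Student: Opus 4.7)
The plan is to follow the dual-matrix strategy of \cite{shen2016general}, adapted to the mixture structure of \eqref{eq:mixture}. Since $\hat{\Sig}=\tfrac{1}{n}\X\X^T$ shares its nonzero eigenvalues with the $n\times n$ dual matrix $S_D=\tfrac{1}{n}\X^T\X$, and since orthonormality of $\{U_i\}$ gives $\X^T\X=\Y^T\Y$ with $\Y_{ij}=y_{ij}$, the analysis reduces to
\begin{equation*}
    S_D=\sum_{m=1}^{M+1}A_m,\qquad A_m=\frac{1}{n}\sum_{i\in H_m}y_{i\cdot}^T y_{i\cdot},
\end{equation*}
where $y_{i\cdot}$ is the $i$-th row of $\Y$ and each summand is a rank-one matrix with leading eigenvalue $\|y_{i\cdot}\|^2/n$.

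For (a)(i) I would work tier by tier after rescaling by $\delmn$. By Assumption \ref{assume:spike}, $\tauifn/\delmn\to 1$ for $i\in H_m\cap\imain$ and $\tauisn/(w_i\delmn)\to 1$ for $i\in H_m\cap\iout$, so $\lamn_i/\delmn\to 1$ or $w_i^2$ respectively, a finite positive limit in either case. An SLLN on $\|y_{i\cdot}\|^2/(n\lamn_i)\to 1$ (handled separately for main and outlier rows) combined with LLN-based near-orthogonality of the rows $\{y_{i\cdot}\}_{i\in H_m}$ then shows that the top $q_m$ eigenvalues of $A_m/\delmn$ converge a.s.\ to these same limits. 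Assumption \ref{assume:tier_rates} makes the contributions from tiers $m'>m$ vanish after this rescaling, the tiers $m'<m$ produce only a low-rank perturbation, and the noise block has operator norm of order $(1+\sqrt{d/n})^2 c_\lam/\delmn$ by Bai--Yin, which vanishes under $d/(n\delta_M^{(n)})\to 0$. Weyl's inequality then yields $\hatlam_i/\lamn_i\rightarrowas 1$ for every $i\le K$.

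For (a)(ii) I would analyze $A_{M+1}$ directly. Under Assumption \ref{assume:nonspike} its entries are i.i.d.\ up to a common scale tending to $c_\lam$, so it is a standard Wishart-type matrix. Classical Bai--Yin theory yields the $(1\pm\sqrt c)^2$ bounds when $0<c<\infty$; the case $c=\infty$ follows from a rescaling argument in which $\tfrac{n}{d}A_{M+1}\rightarrowas c_\lam I_n$ (by LLN on the $d$-sum entry-wise, with spectral norm control from moment bounds); and $c=0$ reduces to ordinary consistency. The finite-rank perturbation $\sum_{m\le M}A_m$ only shifts finitely many eigenvalues by Cauchy interlacing, so the non-spike spectrum is unaffected in the limit. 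Part (b) then follows from the same decomposition with the resolution boundary moved to tier $h$: the strong tiers $1,\ldots,h$ still satisfy $d/(n\delta_h^{(n)})\to 0$ and are captured by the argument of (a)(i), while the weak tiers $h+1,\ldots,M$ have $d/(n\delta_{h+1}^{(n)})\to\infty$, forcing their rank-one contributions to be absorbed into the Wishart bulk of $A_{M+1}$, so the remaining $\hatlam_i$ for $i>p_h$ inherit the $c=\infty$ limit $n\hatlam_i/d\rightarrowas c_\lam$.

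The main obstacle will be the a.s.\ convergence $\|y_{i\cdot}\|^2/(n\lamn_i)\to 1$ for outlier rows $i\in\iout$, since $\tauisn$ grows with $n$ and the marginal distribution of $y_{ij}$ is a mixture that changes with $n$. My approach is to condition on the outlier indicators $\{\mathbf{1}_{j\in s_i}\}_{j\le n}$, apply a triangular-array SLLN to each conditional sum using the bounded fourth-moment assumption on the $z_{ij}$, and then combine the two conditional pieces via Borel--Cantelli to lift to unconditional almost-sure convergence. A related but more routine subtlety is quantitative control of the cross-terms between rows of different tiers in $S_D$, which are sums of independent mean-zero products and require moment-based concentration to rule out spurious growth.
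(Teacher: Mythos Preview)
Your overall architecture---pass to the dual $n\times n$ matrix, split into spike and noise parts, and combine via Weyl---is exactly the paper's strategy. Two points deserve comment.

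First, your decomposition $S_D=\sum_{m=1}^{M+1}A_m$ by tier is finer than necessary. The paper uses the coarser split $S_D=\A+\B$ with $\A=\tfrac{1}{n}\sum_{i\le K}\tilY_i\tilY_i^T$ and $\B=\tfrac{1}{n}\sum_{i>K}\tilY_i\tilY_i^T$, then handles all spike eigenvalues at once by passing to the dual $K\times K$ Gram matrix $\A_{k,D}$ and showing its entries converge elementwise. This avoids your tier-by-tier interlacing bookkeeping (low-rank perturbations from tiers $m'<m$, vanishing contributions from $m'>m$) and gives the sandwich $\lambda_1(\tfrac{1}{n}\tilY_k\tilY_k^T)\le\lambda_k(\A)\le\lambda_1(\A_k)$ directly. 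Your approach can be made to work, but the paper's is cleaner.

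Second---and this is a genuine gap---your treatment of the noise block $A_{M+1}$ is not correct as stated. The entries $y_{ij}$ for $i>K$ are \emph{not} i.i.d.\ up to a common scale: each row $i$ carries its own pair $\tau_{i,1}^{(n)},\tau_{i,2}^{(n)}$ (both tending to $c_\lambda$ but not equal to it, and not equal across $i$), and within each row the mixture indicator switches the scale entry by entry. Bai--Yin does not apply directly. The paper's device (Lemma~\ref{lem:conv_B}) is a PSD sandwich: writing $\B^*=\tfrac{1}{n}\sum_{i>K}\tilZ_i\tilZ_i^T$ for the genuine i.i.d.\ Wishart, one shows $\bigl(\min_{i>K}\tau_{i,1}^{(n)}\bigr)\lambda_k(\B^*)\le\lambda_k(\B)\le\bigl(\max_{i>K}\tau_{i,2}^{(n)}\bigr)\lambda_k(\B^*)$ via Weyl and positive semidefiniteness, then applies Bai--Yin to $\B^*$ and lets the bracketing scales collapse to $c_\lambda$. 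Your $c=\infty$ argument that $\tfrac{n}{d}A_{M+1}\rightarrowas c_\lambda I_n$ via entrywise LLN also fails, since $n\to\infty$ here and the target matrix changes size; the paper instead gets $\tfrac{n}{d}\lambda_{\max}(\B),\tfrac{n}{d}\lambda_{\min}(\B)\rightarrowas c_\lambda$ from Bai--Yin on the dual of $\B^*$.

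Finally, the obstacle you flag for outlier rows is milder than you suggest. After factoring out the deterministic scale $\tau_{k,2}^{(n)}$, the sum $\tfrac{1}{n}\|y_{k\cdot}\|^2$ reduces to $\tau_{k,1}^{(n)}\tfrac{|s_k^c|}{n}\cdot\tfrac{1}{|s_k^c|}\sum_{j\in s_k^c}z_{kj}^2+\tau_{k,2}^{(n)}\tfrac{|s_k|}{n}\cdot\tfrac{1}{|s_k|}\sum_{j\in s_k}z_{kj}^2$, and each average is an ordinary SLLN on the fixed-distribution $z_{kj}^2$'s (with $|s_k|/n\rightarrowas w_k$); no triangular-array machinery is needed.
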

Theorem \ref{thm1:eigenvalue} considers two scenarios: (a) when all spike signals are strong and (b) when strong population signals are assumed only up to the $h$-th tier and the other signals are dominated by the increasing dimension, i.e. $\frac{d}{n\deln_{h+1}} \rightarrow \infty$. It should be noted that these two different scenarios yield different asymptotic regimes: (a) considers all three cases for the limit of $c$, i.e. $0<c<\infty$, $c=\infty$, and $c=0$, whereas Theorem \ref{thm1:eigenvalue} (b) considers only the case of $c = \infty$. This is because the condition $\frac{d}{n\deln_{h+1}} \rightarrow \infty$ of (b) and Assumption \ref{assume:tier_rates} together rule out the cases of $c < \infty$ as $\frac{d}{n\lamn_{K+1}} \rightarrow \infty$ can hold only when $\frac{d}{n} \rightarrow \infty$. In both cases, if the signal in a tier is strong enough so that $\frac{d}{n\deln} \rightarrow 0$, then the sample eigenvalues corresponding to the tier consistently estimate the true eigenvalues. On the other hand, if the spike signals are not that strong, then the corresponding sample eigenvalues tend to be swallowed by the small bulk eigenvalues. 

Intuitively, although an underlying outlier component is dramatically intense, its realized signal is much weaker than the true one because it loses the power due to the small chance of participation. Assumption \ref{assume:spike} reflects this intuition and gives a condition that the $i$th outlier spike signal should be $1/w_i$ times greater than the other main spike signals in the same tier to compensate for this loss of power. Based on this assumption, Theorem \ref{thm1:eigenvalue} demonstrates that such an outlier signal would asymptotically attain the same sample eigenvalues as the main signals in the same tier. In particular, the sample eigenvalue from an outlier signal converges to the dominating variance ($\tauisn$) multiplied by the corresponding proportion ($w_i$) in the underlying mixture distribution (\ref{eq:mixture}). Therefore, the true levels of outlier signals can be approximately estimated by dividing the corresponding eigenvalues by the proportion ($\approx w_i$) of the relevant outliers. 

In many outlier detection methods, it is of great interest to choose the subspace that outlier components are involved in \citep{filzmoser2008outlier,ahn2018distance}. Although Theorem \ref{thm1:eigenvalue} suggests that a few large sample eigenvalues may consistently estimate the true levels of the signals, it is not enough to say that the corresponding principal component directions construct a useful subspace for detecting outliers. This brings to the study of eigenvectors that is discussed in the following theorem. Let $\deln_0 = \infty$ for all $n$.


\begin{theorem}\label{thm2:eigenvector}
Under Assumptions \ref{assume:nonspike}-\ref{assume:tier_rates}, 
\begin{itemize}
    \item[(a)] if $\frac{d}{n\delta_{M}^{(n)}} \rightarrow 0 $, and $0 < c \leq \infty$, then
    \begin{itemize}
        \item[(i)] $\hat{U}_i$ are subspace consistent in the sense that the $\mathrm{angle}(\hat{U}_i, S_m) \rightarrow_{a.s.} 0$ for $i \in H_m $, $ m = 1, \cdots, M+1$.
        \begin{itemize}
		      \item[$\bullet$] For $m=1, \cdots, M-1$, $\mathrm{angle}(\hat{U}_i, S_m) = o(\big\{ \frac{\delmn}{\deln_{m-1}} \vee \frac{\deln_{m+1}}{\deln_{m}} \big\}^{1/2})$.
		      \item[$\bullet$] For $m=M$, $\mathrm{angle}(\hat{U}_i, S_m) = o(\big\{ \frac{\deln_{m}}{\deln_{m-1}} \big\}^{1/2}) \vee O(\big\{ \frac{d}{n\deln_{m}} \big\}^{1/2})$.
		      \item[$\bullet$] For $m=M+1$, $\mathrm{angle}(\hat{U}_i, S_m) = O(\big\{ \frac{d}{n\deln_{m-1}} \big\}^{1/2}) $.
	      \end{itemize}
    \end{itemize}    
    \item[(b)] if $\frac{d}{n\delta_{h}^{(n)}} \rightarrow 0 $ where $1 \leq h < M$ and $\frac{d}{n\delta_{h+1}^{(n)}} \rightarrow \infty $, then
    	\begin{itemize}
    	   \item[(i)] $\hat{U}_i$ for $i \leq p_h $ are subspace consistent in the sense that the $\mathrm{angle}(\hat{U}_i, S_m) \rightarrow_{a.s.} 0$ for $i \in H_m $, $ m = 1, \cdots, h$.
    	   \begin{itemize}
    	       \item[$\bullet$] For $m=1, \cdots, h-1$, $\mathrm{angle}(\hat{U}_i, S_m) = o(\big\{ \frac{\deln_{m}}{\deln_{m-1}} \vee \frac{\deln_{m+1}}{\deln_{m}} \big\}^{1/2})$.
    	       \item[$\bullet$] For $m=h$, $\mathrm{angle}(\hat{U}_i, S_m) = o(\big\{ \frac{\deln_{m}}{\deln_{m-1}} \big\}^{1/2}) \vee O(\big\{ \frac{d}{n\deln_{m}} \big\}^{1/2})$.
    	   \end{itemize}
    	   \item[(ii)] $\hat{U}_i$ for $i > p_h$ are strongly inconsistent in the sense that $|<\hat{U}_i, U_i>| = O(\big\{ \frac{n\lamn_i}{d} \big\}^{1/2})$.
    	\end{itemize} 
\end{itemize}
\end{theorem}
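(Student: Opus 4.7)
The plan is to reduce the statement to the i.i.d.\ spike‐model analysis of \cite{shen2016general} by a rotation and duality, and then certify the reduction despite the scale‐mixture structure via a careful second‐moment analysis. First I would observe that since $\X = \U \Y$ with $\U = [U_1, \cdots, U_d]$ orthonormal and $\Y = (y_{ij})$, one has $\samcov = \rn \U \Y \Y^T \U^T$, so $\hatU_i = \U \tilde V_i$ where $\tilde V_i$ is the $i$-th eigenvector of $M := \rn \Y \Y^T$. Hence $\mathrm{angle}(\hatU_i, S_m) = \mathrm{angle}(\tilde V_i, \mathrm{span}\{e_j : j \in H_m\})$, and the task becomes analyzing eigenvectors of $M$ relative to the canonical tier subspaces. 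I would then pass to the $n \times n$ dual $M_D := \rn \Y^T \Y$ (same nonzero spectrum as $M$, with eigenvectors related by $\tilde V_i = \Y \tilde W_i / \sqrt{n\hatlam_i}$) and decompose $M_D = \sum_{m=1}^{M+1} M_D^{(m)}$ with $M_D^{(m)} := \rn \sum_{i \in H_m} y_i y_i^T$ and $y_i = (y_{i1}, \cdots, y_{in})^T$.

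Next I would establish the central probabilistic input, namely a strong law for the scale-mixture rows asserting that, for each fixed $i \in H_m$,
\begin{eqnarray*}
\rn \sum_{j=1}^n y_{ij}^2 \rightarrowas \lamn_i, \qquad \rn \sum_{j=1}^n y_{ij} y_{kj} \rightarrowas 0 \quad (k \neq i),
\end{eqnarray*}
with $\lamn_i = \tauifn$ for $i \in \imain$ and $\lamn_i = w_i \tauisn$ for $i \in \iout$, and in either case $\lamn_i \asymp \delmn$ by Assumption \ref{assume:spike}. This is proved by standardizing through $\lamn_i^{1/2}$, noting that the standardized variables inherit uniform-in-$n$ bounded fourth moments from the moment hypothesis on the $z_{ij}$, and applying Chebyshev together with Borel-Cantelli. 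Combined with Assumption \ref{assume:nonspike}, the noise block $M_D^{(M+1)}$ becomes a Wishart-type matrix with entries of asymptotic variance $\clam$, so classical Mar\v{c}enko-Pastur bounds yield $\|M_D^{(M+1)}\| \leq \clam(1+\sqrt{c})^2 + o(1)$ a.s.\ when $c$ is finite and $\|M_D^{(M+1)}\| = O(d/n)$ a.s.\ when $c = \infty$.

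With these ingredients in place, I would apply Weyl's inequality to extract eigenvalue separation inside $M_D$ and the Davis-Kahan $\sin\Theta$ theorem to convert separations into angle bounds tier by tier. For part (a), at each $m \leq M$ the block $M_D^{(m)}$ contributes eigenvalues of order $\delmn$, while the combined perturbation from the remaining blocks is at most $O(\deln_{m+1}) + O(d/n)$ and the gap to neighbouring tiers is of order $\deln_{m-1}$ above and $\delmn$ below. Davis-Kahan then gives $\mathrm{angle}(\hatU_i, S_m)^2 \lesssim \frac{\delmn}{\deln_{m-1}} \vee \frac{\deln_{m+1}}{\delmn} \vee \frac{d}{n\delmn}$, which specialises to the three rates in (a)(i) depending on which of the terms binds at the given tier. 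The same argument handles tiers $1, \cdots, h$ in part (b). For the strongly inconsistent statement (b)(ii), I would use the explicit dual formula $\tilde V_i = \Y \tilde W_i / \sqrt{n\hatlam_i}$, combine $\hatlam_i \asymp d/n$ (Theorem \ref{thm1:eigenvalue}(b)(ii)) with the row SLLN bound $\|y_i\|^2/n = O(\lamn_i)$, and apply Cauchy-Schwarz in the spirit of \cite{jung2009pca} to deduce $|\langle \hatU_i, U_i\rangle| = O((n\lamn_i/d)^{1/2})$.

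The hard part will be the SLLN uniformly over a triangular array of scale-mixture rows when $\tauisn$ grows with $n$: Kolmogorov's classical SLLN does not apply directly, and the standardization, uniform fourth-moment check, and Chebyshev-Borel-Cantelli route must be carried out with care because the outlier indicator events may be coupled across distinct outlier rows through the sets $s_i$. However, the cross-moments $\rn \sum_j y_{ij} y_{kj}$ still have mean zero with uniformly controlled variance after standardization, so decorrelation at the required $O(n^{-1/2})$ rate goes through. Once this probabilistic scaffolding is in hand, the perturbation analysis parallels \cite{shen2016general} almost verbatim, and the extension to the non-i.i.d.\ mixture observations comes down precisely to the preceding row-wise second-moment computations.
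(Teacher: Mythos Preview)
Your reduction (rotation to canonical basis, dual matrix, row-wise SLLN for the mixture coefficients) matches the paper's setup, and your argument for (b)(ii) via Cauchy--Schwarz on the dual formula is essentially the paper's. The divergence is at the eigenvector step for the consistent tiers, and the difficulty is not quite where you place it.

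The paper does not use Davis--Kahan. Instead it introduces $\Sbf = \Lam^{-1/2}\hatUbf\hatLambf^{1/2}$ and exploits two algebraic facts: the diagonal of $\Sbf\Sbf^T$ gives $\frac{1}{\lamn_k}\sum_i \hatlam_i\hatuki^2 = \frac{1}{n}\sum_j y_{kj}^2/\lamn_k \to 1$ (your SLLN, which is not the hard part---the paper handles it in a few lines by splitting on $s_k$), while the diagonal of $\Sbf^T\Sbf$ gives $\hatlam_i\sum_k \hatuki^2/\lamn_k \leq \lam_1\bigl(\frac{1}{n}\Lam^{-1/2}\Y\Y^T\Lam^{-1/2}\bigr)$. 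The latter bound is where the real mixture-specific work sits: a separate lemma shows this \emph{globally normalized} top eigenvalue still converges to $(1+\sqrt{c})^2$ despite the non-i.i.d.\ rows, by splitting off the $K$ spike rows via a trace argument and applying Bai--Yin to the remainder. This is not the noise-block Mar\v{c}enko--Pastur control you isolate. Combining the two $\Sbf$-identities with the eigenvalue consistency of Theorem~\ref{thm1:eigenvalue} then yields each tier's rate by elementary inequalities, with no $\sin\Theta$ machinery.

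Your Davis--Kahan sketch has a gap as stated. Taking reference $\sum_{m'\leq m} M_D^{(m')}$ and perturbation $O(\deln_{m+1})+O(d/n)$ bounds the distance of the \emph{dual} eigenvectors $\tilde W_i$ to the column span of $\{y_k:k\leq p_m\}$, a random subspace of $\mathbb{R}^n$, not the target $S_m\subset\mathbb{R}^d$. A single application also cannot produce the upward-leakage term $\delmn/\deln_{m-1}$: with that reference the gap above tier $m$ is $\deln_{m-1}-\delmn$ but the perturbation is only $O(\deln_{m+1})$, giving $O(\deln_{m+1}/\deln_{m-1})$, not $O(\delmn/\deln_{m-1})$; you need a second application with reference $\sum_{m'<m} M_D^{(m')}$ and perturbation $O(\delmn)$. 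After both, you still must transport back to the primal via $\tilde V_i = \Y\tilde W_i/\sqrt{n\hatlam_i}$, and controlling the noise coordinates $(\tilde V_i)_l$ uniformly over the $d-K$ indices $l>K$ is exactly the obstacle the paper's normalized-eigenvalue lemma resolves in one stroke. Your programme is salvageable but not with the single Davis--Kahan step described; the paper's $\Sbf$-identity route bypasses the dual-to-primal translation entirely and is considerably shorter.
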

Under the same scenarios considered in Theorem \ref{thm1:eigenvalue}, Theorem \ref{thm2:eigenvector} studies the asymptotic behavior of the sample eigenvectors in terms of angles as studied in \cite{jung2009pca} and \cite{shen2016general}. In each scenario, if the $m$th tier involves a strong signal such that $\frac{d}{n\deln_m}\rightarrow 0$, then the $i$th sample eigenvector, $\hat{U}_i$ for $i \in H_m$, tends to be in the subspace, $S_m$, which is spanned by the underlying directions in the $m$th tier. This holds for all $i \in H_m$, and thus it follows that the subspace spanned by the sample eigenvectors, $\{\hat{U}_i \}_{i \in H_m}$, converges to the $S_m$. This phenomenon is called the PCA subspace consistency. Also, the different levels of signals in different tiers assumed in Assumption \ref{assume:tier_rates} make the estimated subspaces become distinct for large $n$ and large $d$, and more gaps between the levels accelerate this distinction as indicated by the different convergence rates obtained in the theorem.

In high-dimensional data, searching for outliers in a much lower dimensional subspace where the outliers are distinguishable is advantageous. The subspace often provides critical information for interpreting why an object is outlying and to what extent the object is an outlier \citep{kamber2001data}. This is almost impossible using full dimensions because of the overwhelming noise. Once such a subspace is found, some appropriate conventional outlier detection methods may be applicable for the approximated low-dimensional data. Under the assumption that all outlier signals dominate the dimensions, combining Theorem \ref{thm2:eigenvector} (a) with Theorem \ref{thm1:eigenvalue} (a) allows one to find the outlier-relevant low dimensional subspace by using the first few PC directions whose sample eigenvalues are substantially large and thus separate from the other bulk eigenvalues. 

On the other hand, Theorem \ref{thm2:eigenvector} (b) together with Theorem \ref{thm1:eigenvalue} (b) shows that when only a subset of the outlier signals are strong enough to dominate the increasing dimensions, the first few PC directions with large sample eigenvalues provide a good subspace only for those strong outlier components. Not only this, the strong inconsistency suggests that it becomes very challenging to distinguish the outlier directions missing from the first few PCs from the non-spike directions. This is not simply because the sample eigenvalues from those weak outlier signals are not separable from the bulk sample eigenvalues. Once the spike samples eigenvalues are swallowed by the bulk, then it is likely that the corresponding directions are all mixed with non-spike directions, so any of the single sample eigenvectors may not be representative of those spike directions. Therefore, the approximation of the data matrix using the first few eigenvectors may miss some important information that are relatively weak but not noise. This implies that the outlier components with weak signals or with extremely small participation are harder to be separated from noise. Thus special care should be taken to find the hidden outlying structure.

Also, it should be noted that the theorem does not guarantee that the sample eigenvectors are individually consistent to the true ones. So looking at the individual PC directions may not be enough to detect outliers. To illustrate this situation, a toy example is given in Section \ref{ch_theory:toy_example}. As one of the special and important cases, we now consider the case when all spike eigenvalues are separable, i.e. $q_1 = q_2 = \cdots = q_M = 1$ and $M=K$. Then, Assumption \ref{assume:tier_rates} becomes 
\begin{assumption}\label{assume:individual_rates}
	As $n \rightarrow \infty$, $\lamn_1 \succ \lamn_2 \succ \cdots \succ \lamn_K \succ \lamn_{K+1} > 0$.
\end{assumption}
This allows us to get the individual consistency of eigenvalues as well as eigenvectors instead of subspace consistency. The following corollaries of Theorem \ref{thm1:eigenvalue} and \ref{thm2:eigenvector} describe such individual consistency under the same scenarios with the respective theorems. 

\begin{corollary}\label{cor1:eigenvalue}
	Under Assumptions \ref{assume:nonspike}, \ref{assume:tau1_out}, and \ref{assume:individual_rates}, 
	\begin{itemize}
    \item[(a)] if $\frac{d}{n\lamn_{K}} \rightarrow 0 $, then
    \begin{itemize}
        \item[(i)] for $i \leq K$, $\frac{\hat{\lam}_i}{\lamn_i} \rightarrow_{a.s.} 1$ where $\lamn_i=\tauifn$ for $i \in \imain$ and $\lamn_i=w_i\tauisn$ for $i \in \iout $; 
        \item[(ii)] for $i>K$, 
        \begin{itemize}
            \item[$\bullet$] if $0<c<\infty$, $c_{\lam}(1-\sqrt{c})^2 \leq \hatlam_{n \wedge d} \leq \hatlam_1 \leq c_{\lam}(1+\sqrt{c})^2$ a.s.;
            \item[$\bullet$] if $c=\infty$, $\frac{n\hatlam_i}{d} \rightarrow_{a.s.} c_{\lam}$;
            \item[$\bullet$] if $c=0$, $\hatlam_i \rightarrow_{a.s.} c_{\lam}$;
        \end{itemize}
    \end{itemize}
    \item[(b)] if $\frac{d}{n\lamn_{h}} \rightarrow 0 $ where $1 \leq h <K$ and $\frac{d}{n\lamn_{h+1}} \rightarrow \infty $, then
    \begin{itemize}
        \item[(i)]  for $i  \leq h$, $\frac{\hat{\lam}_i}{\lamn_i} \rightarrow_{a.s.} 1$ where $\lamn_i=\tauifn$ for $i \in \imain$ and $\lamn_i =w_i\tauisn$ for $i \in \iout $;
        \item[(ii)] for $i>h$, $\frac{n\hatlam_i}{d} \rightarrow_{a.s.} c_{\lam}$.
    \end{itemize}
\end{itemize}
\end{corollary}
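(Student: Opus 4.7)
The plan is to obtain Corollary~\ref{cor1:eigenvalue} as a direct specialization of Theorem~\ref{thm1:eigenvalue} once the tier structure collapses to a per-eigenvalue structure. Under Assumption~\ref{assume:individual_rates}, every spike eigenvalue sits at a strictly distinct asymptotic order, so I take $M = K$, $q_m = 1$, and $H_m = \{m\}$ for $m = 1, \dots, K$. The tier-level constants $\delmn$ that appear in Theorem~\ref{thm1:eigenvalue} then reduce to one entry per tier, and I identify $\delmn := \lamn_m$.

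Because Assumption~\ref{assume:spike} is not listed among the corollary's hypotheses, it must be verified under this identification. For $m \in \imain$ we have $w_m = 0$, so $\lamn_m = \tau_{m,1}^{(n)}$ and the ratio $\tau_{m,1}^{(n)}/\delmn$ is identically one. For $m \in \iout$, expand $\lamn_m = (1-w_m)\tau_{m,1}^{(n)} + w_m \tau_{m,2}^{(n)}$: Assumption~\ref{assume:tau1_out} gives $\tau_{m,1}^{(n)} \to c_{\lam}$, which is bounded, while Assumption~\ref{assume:individual_rates} combined with $\lamn_{K+1} \to c_{\lam}$ from Assumption~\ref{assume:nonspike} forces $\lamn_m \to \infty$. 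Hence $w_m \tau_{m,2}^{(n)}$ dominates the first summand and $\tau_{m,2}^{(n)}/(w_m \lamn_m) \to 1$, which is precisely Assumption~\ref{assume:spike} with our choice of $\delmn$.

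With this identification, Assumption~\ref{assume:tier_rates} reads $\lamn_1 \succ \lamn_2 \succ \cdots \succ \lamn_K \succ \lamn_{K+1}$, which is Assumption~\ref{assume:individual_rates} itself. The rate conditions in the corollary---$\frac{d}{n\lamn_K} \to 0$ in part (a), and the pair $\frac{d}{n\lamn_h} \to 0$, $\frac{d}{n\lamn_{h+1}} \to \infty$ in part (b)---coincide with those of Theorem~\ref{thm1:eigenvalue}(a) and (b) under $\delmn = \lamn_m$. Applying the theorem then produces the stated conclusions verbatim, and since each $H_m$ now contains a single index, there is no distinction between ``subspace'' and ``individual'' consistency for the sample eigenvalues, so no extra work is needed to upgrade the former to the latter.

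The essential content of the reduction is thus the verification of Assumption~\ref{assume:spike} in the outlier case, which rests on combining the bounded-noise Assumption~\ref{assume:tau1_out} with the divergence of $\lamn_m$ forced by Assumption~\ref{assume:individual_rates}; this is the only step with any analytic content. The remainder of the argument is a mechanical translation between the tiered language of Theorem~\ref{thm1:eigenvalue} and the per-eigenvalue language of Corollary~\ref{cor1:eigenvalue}, and I anticipate no additional obstacle.
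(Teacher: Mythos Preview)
Your proposal is essentially the paper's own (implicit) argument: the paper presents Corollary~\ref{cor1:eigenvalue} as the immediate specialization of Theorem~\ref{thm1:eigenvalue} to single-element tiers $H_m=\{m\}$ with $\delmn=\lamn_m$, and does not give a separate proof. You correctly identify the one non-mechanical step, namely recovering Assumption~\ref{assume:spike} from Assumptions~\ref{assume:nonspike}, \ref{assume:tau1_out}, and \ref{assume:individual_rates}.

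One small caveat on that step: your claim that Assumption~\ref{assume:individual_rates} together with $\lamn_{K+1}\to c_\lambda$ ``forces $\lamn_m\to\infty$'' is not quite right, since $a_n\succ b_n$ only means $\lim a_n/b_n>1$, so a priori the spike eigenvalues could remain bounded. This is harmless in every regime except the $c=0$ branch of part~(a): when $c>0$ (and in all of part~(b), where necessarily $c=\infty$), the rate condition $d/(n\lamn_K)\to 0$ already forces $\lamn_K\to\infty$ and your verification goes through verbatim. In the $c=0$ bounded-spike subcase, the identification $\delmn=\lamn_m$ does not recover Assumption~\ref{assume:spike} exactly for outlier indices, and the paper's own proof of Theorem~\ref{thm1:eigenvalue}(a) handles that subcase separately by citing Baik--Silverstein rather than via Lemma~\ref{lem:conv_A}; you should route through the same citation there.
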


\begin{corollary}\label{cor2:eigenvector}
	Under Assumptions \ref{assume:nonspike}, \ref{assume:tau1_out}, and \ref{assume:individual_rates}, 
\begin{itemize}
    \item[(a)] if $\frac{d}{n\lamn_{K}} \rightarrow 0 $, and $0 < c \leq \infty$, then
    \begin{itemize}
        \item[(i)] $\hat{U}_i$ are consistent with $U_i$ in the sense that $\mathrm{angle}(\hat{U}_i, U_i) \rightarrow_{a.s.} 0$ for $i=1,\cdots,K$.
        \begin{itemize}
		      \item[$\bullet$] For $i=1, \cdots, K-1$, $\mathrm{angle}(\hat{U}_i, U_i) = o(\big\{ \frac{\lamn_i}{\lamn_{i-1}} \vee \frac{\lamn_{i+1}}{\lamn_{i}} \big\}^{1/2})$.
		      \item[$\bullet$] For $i=K$, $\mathrm{angle}(\hat{U}_i, U_i) = o(\big\{ \frac{\lamn_{i}}{\lamn_{i-1}} \big\}^{1/2}) \vee O(\big\{ \frac{d}{n\lamn_{i}} \big\}^{1/2})$.
		      \item[$\bullet$] For $i>K$, $\mathrm{angle}(\hat{U}_i, S) = O(\big\{ \frac{d}{n\lamn_{i-1}} \big\}^{1/2}) $ where $S=\mathrm{span}(U_i : i>K)$.
	      \end{itemize}
    \end{itemize}    
    \item[(b)] if $\frac{d}{n\lamn_{h}} \rightarrow 0 $ where $1 \leq h < K$ and $\frac{d}{n\lamn_{h+1}} \rightarrow \infty $, then
    	\begin{itemize}
    	   \item[(i)] $\hat{U}_i$ are consistent with $U_i$ in the sense that $\mathrm{angle}(\hat{U}_i, U_i) \rightarrow_{a.s.} 0$ for $i=1,\cdots,h$.
    	   \begin{itemize}
    	       \item[$\bullet$] For $i=1, \cdots, h-1$, $\mathrm{angle}(\hat{U}_i, U_i) = o(\big\{ \frac{\lamn_{i}}{\lamn_{i-1}} \vee \frac{\lamn_{i+1}}{\lamn_{m}} \big\}^{1/2})$.
    	       \item[$\bullet$] For $i=h$, $\mathrm{angle}(\hat{U}_i, U_i) = o(\big\{ \frac{\lamn_{i}}{\lamn_{i-1}} \big\}^{1/2}) \vee O(\big\{ \frac{d}{n\lamn_{i}} \big\}^{1/2})$.
    	   \end{itemize}
    	   \item[(ii)] $\hat{U}_i$ for $i > h$ are strongly inconsistent in the sense that $|<\hat{U}_i, U_i>| = O(\big\{ \frac{n\lamn_i}{d} \big\}^{1/2})$.
    	\end{itemize} 
\end{itemize}
\end{corollary}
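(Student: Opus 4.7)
The corollary is a direct specialization of Theorem \ref{thm2:eigenvector} to the case in which every tier contains exactly one spike component, so the plan is to derive it by translating the hypotheses and conclusions of the theorem under this restriction rather than repeating any of its technical work.

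First, I would verify the identification $M=K$, $q_m=1$ for $1\leq m \leq K$, $p_m=m$, $H_m=\{m\}$, and $\deln_m = \lamn_m$. Under this identification, Assumption \ref{assume:individual_rates} is exactly Assumption \ref{assume:tier_rates}, while Assumption \ref{assume:spike} becomes automatic: within a singleton tier the condition $\lim \tauifn / \deln_m = 1$ (for a main index) or $\lim \tauisn / (w_i \deln_m) = 1$ (for an outlier index) simply defines $\lamn_i$ consistently with the labeling in the corollary. Assumptions \ref{assume:nonspike} and \ref{assume:tau1_out} are imported as is.

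Second, with $H_m=\{m\}$ the subspace $S_m = \mathrm{span}\{U_i : i \in H_m\}$ collapses to the one-dimensional span of $U_m$. Because the angle between a unit vector and a one-dimensional subspace equals (up to sign) the angle between that vector and its single spanning direction, the subspace consistency assertion $\mathrm{angle}(\hat{U}_i, S_m) \rightarrowas 0$ for $i \in H_m$ becomes the individual consistency $\mathrm{angle}(\hat{U}_m, U_m) \rightarrowas 0$. Applying Theorem \ref{thm2:eigenvector}(a) with the substitutions above yields part (a)(i) of the corollary directly, including the intermediate-tier rate $o(\{\lamn_i/\lamn_{i-1} \vee \lamn_{i+1}/\lamn_i\}^{1/2})$ for $i \leq K-1$ and the boundary rate at $i = K$. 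For $i > K$, the subspace $S_{K+1} = \mathrm{span}(U_j : j > K)$ equals the set $S$ defined in the corollary, and the bound $O(\{d/(n\deln_M)\}^{1/2})$ translates to $O(\{d/(n\lamn_{i-1})\}^{1/2})$ since $\deln_M = \lamn_K = \lamn_{i-1}$ when $i = K+1$, with weaker bounds holding trivially for larger $i$ by monotonicity.

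Third, I would apply Theorem \ref{thm2:eigenvector}(b) to get part (b) of the corollary: the hypotheses $d/(n\deln_h)\to 0$ and $d/(n\deln_{h+1})\to\infty$ become the stated hypotheses on $\lamn_h$ and $\lamn_{h+1}$; the conclusion (b)(i) follows from the singleton-tier reduction with $p_h=h$, and the strong inconsistency statement (b)(ii) transfers verbatim since it is already phrased in terms of individual indices $i > p_h = h$ in Theorem \ref{thm2:eigenvector}. There is no genuine obstacle in this argument; the only care required is bookkeeping of indices and the convention $\deln_0 = \infty$, which guarantees that the $m=1$ case of the intermediate-tier rate is well-defined. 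Since all quantitative content is inherited from Theorem \ref{thm2:eigenvector}, the corollary follows.
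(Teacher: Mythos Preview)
Your proposal is correct and matches the paper's approach: the paper states Corollary \ref{cor2:eigenvector} (together with Corollary \ref{cor1:eigenvalue}) as an immediate specialization of Theorems \ref{thm1:eigenvalue} and \ref{thm2:eigenvector} to the case $q_1=\cdots=q_M=1$, $M=K$, without supplying any separate argument. Your bookkeeping of the identifications $p_m=m$, $\deln_m=\lamn_m$, $S_m=\mathrm{span}\{U_m\}$, and your observation that the $i>K$ rate in the corollary is simply the (possibly weakened) theorem rate, are exactly what is needed.
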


\section{Illustration using a toy example}\label{ch_theory:toy_example}
\noindent We now illustrate the PCA subspace consistency with a toy example under the model (\ref{eq:mixture}), highlighting the situation where an outlier component is captured by the first few PC directions but none of the PC directions are individually representative of the outlier component. 

First, let us describe the simulation setting. We generated $n=200$ independent data vectors in $d=3000$ dimensions based on our model described in (\ref{eq:mixture}). To generate such data, $\{z_{ij} \}_{1 \leq i \leq d, 1 \leq j \leq n}$ are assumed to be distributed as independent $N(0,1)$ and the standard basis vectors, $\{e_i \}_{1 \leq i \leq d}$, are used as underlying eigenvectors $\{U_i \}_{1 \leq i \leq d}$ with $e_1, \cdots, e_9$ being the main spike directions and $e_{10}$ being an outlier spike direction. For the main spike directions, the underlying variations are assumed to be $\tau_{i,1}=3000,1000,100,90,80,70,60,50,40$ for $i=1, \cdots, 9$. For the outlier spike direction $e_{10}$, we assume $\tau_{10,1}=2000, \tau_{10,2}=1$ and the outlier proportion $w_{10}=0.02$. For the other non-spike directions $\{ U_i \}_{11 \leq i \leq d}$ corresponding to noise, $\tau_{i,1}=1$ and $\tau_{i,2}=1$ are assumed. A realization from this model had the $4$ outliers, denoted by $X_{1}, \cdots, X_{4}$, with $196$ normal data points, denoted by $X_{5}, \cdots, X_{200}$.

For this data set, we constructed a sample covariance matrix where PCA was applied and obtained a set of sample eigenvectors and eigenvalues. Since the true spike directions are $e_1, \cdots, e_{10}$, we can examine the contribution of each sample eigenvector onto the true spike directions simply by taking the squares of the entries. The sum of the squares of entries in each sample eigenvector is one and thus the squared values $\hat{u}_{ji}^2$, i.e. the squared $j$th entry of $\hatU_i$, can be regarded as the explained percentage of the underlying vector $e_j$ in the direction $\hatU_i$.  Table \ref{tab:toy_example_eigenvectors} gives the squares of the first 12 entries (in rows) of the first 11 eigenvectors $\hatU_1, \cdots, \hatU_{11}$ (in columns). The last two rows indicate the corresponding sample eigenvalues $\hatlam_i$ and the angles between the true outlier direction $e_{10}$ and $\hatU_{i}$ for $i=1, \cdots, 11$. The largest value in each $\hatU_i$ is indicated using red, and if the red value, say $\hat{u}_{ji}^2$, is close to one and the other entries are close to zero, then the $\hatU_i$ is a good estimate of the $e_j$. For example, the first entry of $\hatU_1$ is approximately one with all the other entries of zero, indicating that the first underlying direction $e_1$ is well estimated by $\hatU_1$. Similarly, $\hatU_2$ is a good estimate of the $e_2$.

On the other hand, none of the $\hatU_3, \cdots, \hatU_{10}$ has an entry which is close to one. Instead, they have several nonzero entries, indicating that each of them has some correlation with several underlying directions. This can be understood that any of the underlying directions, $e_3, \cdots, e_{10}$ are well estimated by the single sample eigenvectors. Nonetheless, an important note is that, for each row $j=3, \cdots, 10$, the sum of the squared $j$th entries of $\hatU_3, \cdots, \hatU_{10}$ is close to one. This supports the PCA subspace consistency in that each of the true eigenvectors $e_3, \cdots, e_{10}$ can be estimated by a linear combination of $\hatU_3, \cdots, \hatU_{10}$ rather than any individual directions. As described in Theorem \ref{thm2:eigenvector}, this is because the underlying variation in $e_3, \cdots, e_{10}$ are nearly in the same tier, which tends to somewhat discourage the individual consistency. 

In particular, it should be noted that none of the first 10 eigenvectors alone provide good estimates for the outlier direction $e_{10}$ as highlighted in lightblue. Specifically, there is no direction that describes the $e_{10}$ more than $30\%$. The angles in the last row also reveal that none of those 10 sample eigenvectors are close to $e_{10}$. However, the sum of the squared 10th entries in the first 10 PC directions, $\sum_{i=1}^{10}\hat{u}_{i,10}^2$, is almost $0.88$, indicating that the $e_{10}$ may be well captured by the subspace spanned by the 10 sample eigenvectors. As discussed earlier, therefore, this supports the concept that although using individual PC directions for detecting outliers may be ineffective, this subspace does preserve the critical information for the outliers and thus may be used to detect those outliers. 


\begin{landscape}
\begin{table}
\centering
\footnotesize
\begin{tabular}{rrrrrrrrrrrr}
  \hline
 & $\hat{U}_1$ & $\hat{U}_2$ & $\hat{U}_3$ & $\hat{U}_4$ & $\hat{U}_5$ & $\hat{U}_6$ & $\hat{U}_7$ & $\hat{U}_8$ & $\hat{U}_9$ & $\hat{U}_{10}$ & $\hat{U}_{11}$ \\ 
  \hline
1 & \textcolor{red}{0.996} & 0.000 & 0.000 & 0.000 & 0.000 & 0.000 & 0.000 & 0.000 & 0.000 & 0.000 & 0.000 \\ 
  2 & 0.000 & \textcolor{red}{0.987} & 0.000 & 0.000 & 0.000 & 0.000 & 0.000 & 0.000 & 0.001 & 0.000 & 0.000 \\ 
  3 & 0.000 & 0.000 & \textcolor{red}{0.657} & 0.001 & 0.072 & 0.173 & 0.004 & 0.000 & 0.000 & 0.002 & 0.000 \\ 
  4 & 0.000 & 0.000 & 0.020 & \textcolor{red}{0.665} & 0.111 & 0.023 & 0.055 & 0.011 & 0.001 & 0.003 & 0.000 \\ 
  5 & 0.000 & 0.001 & 0.029 & 0.031 & \textcolor{red}{0.330} & \textcolor{red}{0.422} & 0.048 & 0.008 & 0.001 & 0.013 & 0.000 \\ 
  6 & 0.000 & 0.000 & 0.071 & 0.006 & 0.198 & 0.023 & 0.064 & \textcolor{red}{0.501} & 0.007 & 0.003 & 0.000 \\ 
  7 & 0.000 & 0.000 & 0.036 & 0.002 & 0.080 & 0.009 & \textcolor{red}{0.408} & 0.201 & 0.053 & 0.059 & 0.000 \\ 
  8 & 0.000 & 0.000 & 0.000 & 0.007 & 0.004 & 0.021 & 0.094 & 0.047 & 0.243 & \textcolor{red}{0.382} & 0.000 \\ 
  9 & 0.000 & 0.001 & 0.003 & 0.000 & 0.012 & 0.001 & 0.010 & 0.000 & \textcolor{red}{0.467} & 0.292 & 0.000 \\ 
  \rowcolor{LightCyan}
  10 & 0.000 & 0.000 & 0.101 & 0.182 & 0.079 & 0.215 & 0.169 & 0.096 & 0.030 & 0.007 & 0.000 \\ 
  11 & 0.000 & 0.000 & 0.000 & 0.000 & 0.000 & 0.000 & 0.000 & 0.000 & 0.000 & 0.000 & 0.000 \\ 
  12 & 0.000 & 0.000 & 0.000 & 0.000 & 0.000 & 0.000 & 0.000 & 0.000 & 0.000 & 0.000 & 0.000 \\ 
       & \vdots & \vdots & \vdots & \vdots & \vdots & \vdots & \vdots & \vdots & \vdots & \vdots & \vdots \\
   \hline
  $\hatlam_i$ & 3519.209 & 996.408 & 123.856 & 99.055 & 91.825 & 86.694 & 71.458 & 70.393 & 52.236 & 42.087 & 16.850 \\ 
  \hline
    \rowcolor{LightCyan}
  angle & 89.6 & 89.1 & 71.5 & 64.8 & 73.7 & 62.4 & 65.7 & 71.9 & 80.0 & 85.3 & 89.4 \\   
  \hline
\end{tabular}
\caption[Sample eigenvectors]{\small{This table shows the first 11 sample eigenvectors with squared entries. The largest value in each $\hatU_i$ is colored using a red font and the row corresponding to the outlier signal, $e_{10}$, is hightlighted using lightblue background. This row indicates how much each $\hatU_i$ explains the outlier signal. The last two rows respectively show the sample eigenvalues corresponding to $\{\hatU_i\}_{1 \leq i \leq 11}$ and the angles between $\hatU_i$ and the true outlier signal, $e_{10}$. }}
\label{tab:toy_example_eigenvectors}
\end{table}
\end{landscape}	

\section{Proofs}\label{ch_theory:proofs}
\noindent In this section, we provide proofs for the theorems in Section \ref{ch_theory:asymptotics}. The main steps in the proofs are similar to the proofs in \cite{shen2016general} but our different setting for the underlying distribution of data requires the addition of more detail.

Let $\Y$ be a $d \times n$ matrix whose column vectors are $ Y_1, Y_2, \cdots, Y_n$ where $Y_j = (y_{1j},\cdots, y_{dj})^{T}$ and $y_{ij}$'s are independent random variables in our model in (\ref{eq:mixture}). Then, switching the roles of columns and rows, we get the $n \times n$ dual (Gram) matrix of the sample covariance matrix $\hat{\Sig}$
\begin{eqnarray*}
\dual{\samcov}=\frac{1}{n}\X^{T}\X=\frac{1}{n}\Y^{T}\U^{T} \U \Y = \frac{1}{n}\Y^{T}\Y,
\end{eqnarray*}
and it is well known that they share the same nonzero eigenvalues. Let us define two matrices that will be treated separately in the proof. Let 
\begin{eqnarray*}
	\A = \frac{1}{n} \sum_{i=1}^{K}\tilY_i \tilY_i^{T}, ~~~\mathrm{and}~~\B= \frac{1}{n} \sum_{i=K+1}^{d}\tilY_i \tilY_i^{T},
\end{eqnarray*}
where $\tilY_i$ is the $i$-th row vector of $\Y$.
Then, 
\begin{eqnarray*}
	\dual{\samcov} & = & \frac{1}{n} \sum_{i=1}^{d}\tilY_i \tilY_i^{T} 
	 =  \A + \B.
\end{eqnarray*}
\indent Before the proof, we provide two popular lemmas. Lemma \ref{lem:Weyl_ineq} provides the upper and lower bounds for the eigenvalues of a matrix that can be expressed as the sum of two symmetric matrices.  

\begin{lemma}\label{lem:Weyl_ineq}
(Weyl inequality) Let $A$ and $B$ be $n \times n$ real symmetric matrices. Then, for all $j,k,l = 1, \cdots, n$, 
\begin{eqnarray*}
	\lam_k(A)+\lam_l(B) & \leq & \lam_j(A+B)~~~\mathrm{for}~k+l=j+n \\
	\lam_k(A)+\lam_l(B) & \geq & \lam_j(A+B)~~~\mathrm{for}~k+l=j+1
\end{eqnarray*}	
where $\lam_j(A)$ is the $j$-th largest eigenvalue of a matrix $A$.
\end{lemma}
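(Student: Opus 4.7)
The plan is to derive both inequalities from the Courant--Fischer min--max characterization of eigenvalues of real symmetric matrices. Recall that for a symmetric $n \times n$ matrix $M$,
\begin{equation*}
\lam_j(M) \;=\; \max_{\dim V = j} \min_{x \in V,\, \|x\|=1} x^T M x \;=\; \min_{\dim V = n-j+1} \max_{x \in V,\, \|x\|=1} x^T M x.
\end{equation*}
The one combinatorial ingredient I will use is that two subspaces $V_1, V_2 \subset \mathbb{R}^n$ satisfy $\dim(V_1 \cap V_2) \geq \dim V_1 + \dim V_2 - n$.

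For the upper inequality (the case $k+l = j+1$, so the claim is $\lam_j(A+B) \leq \lam_k(A) + \lam_l(B)$), I would fix orthonormal eigenbases $u_1, \dots, u_n$ of $A$ and $v_1, \dots, v_n$ of $B$ ordered by decreasing eigenvalue. Set $V_A = \mathrm{span}\{u_k, u_{k+1}, \dots, u_n\}$ and $V_B = \mathrm{span}\{v_l, v_{l+1}, \dots, v_n\}$, so $\dim V_A = n - k + 1$ and $\dim V_B = n - l + 1$. The dimension bound gives $\dim(V_A \cap V_B) \geq (n - k + 1) + (n - l + 1) - n = n - j + 1$. For any unit $x \in V_A \cap V_B$ the spectral expansions yield $x^T A x \leq \lam_k(A)$ and $x^T B x \leq \lam_l(B)$, hence $x^T (A + B) x \leq \lam_k(A) + \lam_l(B)$. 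Selecting any $(n-j+1)$-dimensional subspace inside $V_A \cap V_B$ and invoking the second (min--max) form of $\lam_j(A+B)$ delivers the inequality.

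For the lower inequality (the case $k+l = j+n$, so the claim is $\lam_j(A+B) \geq \lam_k(A) + \lam_l(B)$), I run the dual construction: let $W_A = \mathrm{span}\{u_1, \dots, u_k\}$ and $W_B = \mathrm{span}\{v_1, \dots, v_l\}$, giving $\dim(W_A \cap W_B) \geq k + l - n = j$. For unit $x$ in the intersection, $x^T A x \geq \lam_k(A)$ and $x^T B x \geq \lam_l(B)$, so $x^T(A+B)x \geq \lam_k(A) + \lam_l(B)$. Feeding any $j$-dimensional subspace of $W_A \cap W_B$ into the max--min form of $\lam_j(A+B)$ then produces the desired lower bound.

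The only genuine difficulty is bookkeeping the index arithmetic so that the dimensions of the chosen subspaces collapse exactly to $n - j + 1$ for the upper bound and $j$ for the lower one; there is no analytic obstacle since Weyl's inequality is a classical identity of linear algebra. In fact the authors most likely intend to cite this lemma rather than reproduce its proof, and it appears here only as scaffolding for the eigenvalue perturbation arguments applied to the decomposition $\dual{\samcov} = \A + \B$.
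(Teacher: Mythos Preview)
Your argument via the Courant--Fischer min--max principle is correct, and your index bookkeeping checks out: in the upper-bound case the intersection $V_A \cap V_B$ has dimension at least $(n-k+1)+(n-l+1)-n = n-j+1$, and in the lower-bound case $W_A \cap W_B$ has dimension at least $k+l-n=j$, exactly as needed to feed into the two forms of the variational characterization.

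Your closing remark is also on target: the paper does not prove this lemma at all. It is introduced as one of ``two popular lemmas'' and simply stated as the classical Weyl inequality, to be invoked repeatedly in bounding $\lam_k(\A+\B)$ in terms of the eigenvalues of $\A$ and $\B$ separately. So there is no approach to compare against; you have supplied a complete standard proof where the paper is content to cite the result.
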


Next, Lemma \ref{lem:Bai-Yin} provides the convergence of the largest and smallest non-zero eigenvalues of a random matrix, which is known as Bai-Yin's law \citep{bai1993}. 
\begin{lemma}\label{lem:Bai-Yin} (Bai-Yin's law)
	Suppose $B=\frac{1}{q}VV^{T}$ where $V$ is a $p \times q$ random matrix composed of i.i.d. random variables with zero mean, unit variance and finite fourth moment. As $q \rightarrow \infty$ and $\frac{p}{q} \rightarrow c \in [0, \infty)$, the largest and smallest non-zero eigenvalues of $B$ converge almost surely to $(1+\sqrt{c})^2$ and $(1-\sqrt{c})^2$, respectively.  
\end{lemma}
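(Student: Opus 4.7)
The plan is to prove Bai--Yin's law in two pieces: the upper edge $(1+\sqrt{c})^2$ via the trace moment method, and the lower edge $(1-\sqrt{c})^2$ via a resolvent / small-ball argument, with a preliminary truncation step that is where the finite fourth moment is actually used. Throughout, one should keep in mind that the nonzero eigenvalues of $B = \frac{1}{q}VV^T$ and of $\frac{1}{q}V^TV$ coincide, so by duality we may assume $p \leq q$ (i.e.\ $c \leq 1$) when convenient.

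First I would carry out truncation: set $\tilde v_{ij} = v_{ij}\mathbf{1}_{\{|v_{ij}| \leq q^{1/4}\}} - \mathbb{E}[v_{ij}\mathbf{1}_{\{|v_{ij}| \leq q^{1/4}\}}]$ and rescale to unit variance, producing $\tilde V$ and $\tilde B = \frac{1}{q}\tilde V \tilde V^T$. Finiteness of $\mathbb{E}[v_{11}^4]$ gives $\sum_q \mathbb{P}(v_{11}^2 > \sqrt q) < \infty$ via Chebyshev, and a union bound plus Borel--Cantelli shows that almost surely $V = \tilde V$ entrywise for all but finitely many $q$ (after absorbing the recentering as a negligible rank-one perturbation whose spectral effect vanishes by Weyl). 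Hence the edge eigenvalues of $B$ and $\tilde B$ share the same a.s.\ limits, and we may assume entries are bounded by $q^{1/4}$ with mean zero and variance one.

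Next, for the upper bound $\limsup \lambda_{\max}(B) \leq (1+\sqrt c)^2$ a.s., I would use $\lambda_{\max}(\tilde B)^{2k} \leq \mathrm{tr}(\tilde B^{2k})$ and expand $\mathbb{E}[\mathrm{tr}(\tilde B^{2k})]$ as a sum over closed bipartite walks of length $2k$ on the graph with $p$ row-vertices and $q$ column-vertices. Independence forces the leading contribution to come from walks that traverse each edge exactly twice; such walks correspond to plane trees and are counted by a Narayana-type convolution of Catalan numbers, summing to $(1 + \sqrt{c_q})^{2k}$ up to lower-order terms, where $c_q = p/q$. Taking $k = k_q$ growing like $\lfloor \log q \rfloor$, Markov's inequality plus Borel--Cantelli converts the moment bound into the almost-sure upper edge. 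The matching lower bound $\liminf \lambda_{\max}(B) \geq (1+\sqrt c)^2$ follows directly from the a.s.\ weak convergence of the empirical spectral distribution of $B$ to the Marchenko--Pastur law, since MP has positive mass near its right endpoint.

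The hard part is the smallest nonzero eigenvalue. The inequality $\limsup \lambda_{\min,\neq 0}(B) \leq (1-\sqrt c)^2$ again follows from MP convergence, but the matching lower bound $\liminf \lambda_{\min,\neq 0}(B) \geq (1-\sqrt c)^2$ cannot be extracted from $\mathrm{tr}(\tilde B^{2k})$, since positive trace moments are dominated by large eigenvalues. I would follow the Silverstein--Bai--Yin route: analyze the Stieltjes transform $m_{\tilde B}(z) = \frac{1}{p}\mathrm{tr}(\tilde B - zI)^{-1}$ for $z$ just below $(1-\sqrt c)^2$, establish the self-consistent equation $m(z) = \bigl(1 - c - z - cz m(z)\bigr)^{-1}$ up to vanishing error via cumulant expansion (here again finite fourth moment controls the remainder), and argue that if an eigenvalue strayed strictly below $(1-\sqrt c)^2 - \varepsilon$ infinitely often, the imaginary part of $m$ near that point would contradict the limiting MP Stieltjes transform. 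The combinatorics and the resolvent identities here are where most of the work lies, and this smallest-eigenvalue half is precisely the delicate contribution of Bai--Yin (1993) over earlier versions of the theorem.
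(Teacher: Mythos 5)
The paper never proves this lemma: it is quoted as Bai--Yin's law with a citation to Bai and Yin (1993), so there is no in-paper argument to compare against, and what you are attempting is a from-scratch proof of a deep classical theorem. Your outline (truncation, trace--moment method for the top edge, a separate and harder argument for the bottom edge) is the historically correct strategy, and the easy directions you dispatch via Marchenko--Pastur mass near the endpoints are fine, but two steps as written do not close. The truncation level $q^{1/4}$ is too low: under only a finite fourth moment, $\max_{i\leq p,\,j\leq q}|v_{ij}|$ is typically of order $q^{1/2}$ (there are about $q^{2}$ entries and $q^{2}\,\mathbb{P}(|v_{11}|>t)\approx 1$ when $t\approx q^{1/2}$), so the event $V=\tilde V$ fails for infinitely many $q$; indeed $\sum_q pq\,\mathbb{P}(|v_{11}|>q^{1/4})$ can diverge, and Borel--Cantelli gives you nothing. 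The standard fix is to truncate at $\delta_q\sqrt q$ with $\delta_q\to 0$ slowly, using the dyadic-block bound $\sum_k 2^{2k}\,\mathbb{P}(|v_{11}|>\varepsilon 2^{k/2})\leq C\,\mathbb{E}[v_{11}^4]<\infty$ to show that almost surely only finitely many entries of the whole array are ever affected, and then to track the resulting variance and higher-moment distortions through the combinatorics.

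More seriously, the proposed proof of $\liminf$ of the smallest nonzero eigenvalue being at least $(1-\sqrt c)^2$ does not work as described. A single eigenvalue escaping below $(1-\sqrt c)^2-\varepsilon$ contributes only $O(1/p)$ to the empirical spectral distribution, hence only $O(1/p)$ to $m_{\tilde B}(z)$ at any point bounded away from it; pointwise convergence of the Stieltjes transform to the Marchenko--Pastur transform is equivalent to weak ESD convergence and therefore cannot detect one stray eigenvalue. This is exactly the difficulty Bai and Yin overcame, and they did it by a recentered moment method, bounding $\mathbb{E}\,\mathrm{tr}\bigl((B-(1+c_q)I)^{2k}\bigr)$ by roughly $p\,(2\sqrt{c_q}+\varepsilon)^{2k}$ with $k\sim\log q$, which controls both spectral edges simultaneously because $(1\pm\sqrt c)^2=1+c\pm 2\sqrt c$. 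The alternative resolvent route (Bai--Silverstein's ``no eigenvalues outside the support'') requires a rate of convergence for the Stieltjes transform faster than $1/p$, uniformly as $\Im z\downarrow 0$, which is far beyond the self-consistent equation you invoke. Either supply the recentered trace estimate or import that uniform resolvent bound; as it stands the lower-edge half of your argument has a genuine gap.
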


\subsection{Proof of Theorem \ref{thm1:eigenvalue}}
\begin{proof}
\indent The proof consists of the following three steps:
\begin{enumerate}
	\item Establish the convergence of $\eigval{k}{\A}$.
	\item Establish the convergence of $\eigval{k}{\B}$.
	\item Establish the convergence of $\eigval{k}{\A + \B}$.
\end{enumerate} 

Lemma \ref{lem:conv_A} proves the first step.
\begin{lemma}\label{lem:conv_A}
	As $n \rightarrow \infty$, we have
\begin{eqnarray*}
	\frac{1}{\lamn_{k}}\eigval{k}{\A} \rightarrow 1~~\mathrm{a.s.}~~~\mathrm{for}~~k=1, \cdots, K.
\end{eqnarray*}
where $\lamn_i = \tauifn$ for $i \in \imain$ and $\lamn_i = w_i \tauisn$ for $i \in \iout$.
\end{lemma}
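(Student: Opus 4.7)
The plan is to reduce the problem to the $K \times K$ Gram matrix and rescale rows. Since $\A = \frac{1}{n}\Y_{1:K}^{T}\Y_{1:K}$ where $\Y_{1:K}$ is the submatrix of $\Y$ consisting of its first $K$ rows, $\A$ shares its $K$ nonzero eigenvalues with $\A^{*} := \frac{1}{n}\Y_{1:K}\Y_{1:K}^{T}$, a $K \times K$ matrix. Define normalized entries $\tilde Z_{ij} := y_{ij}/\sqrt{\lamn_i}$ for $i \leq K$, and the diagonal matrix $D_n := \mathrm{diag}(\sqrt{\lamn_1},\ldots,\sqrt{\lamn_K})$, so that $\A^{*} = D_n M_n D_n$ where $M_n$ has $(i,l)$-entry $(M_n)_{il} = \frac{1}{n}\sum_{j=1}^{n}\tilde Z_{ij}\tilde Z_{lj}$.

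The heart of the argument is to show $M_n \to I_K$ a.s. entrywise (equivalently in operator norm, since $K$ is fixed). For $i \in \imain$ one has $\tilde Z_{ij} = z_{ij}$, so the classical SLLN for i.i.d. sequences gives $\frac{1}{n}\sum_j \tilde Z_{ij}^{2} \to 1$ a.s. For $i \in \iout$ the $\tilde Z_{ij}$ form a triangular array of mixture variables; I compute $E[\tilde Z_{ij}^{2}] = (1-w_i)\tauifn/\lamn_i + 1 \to 1$, where the first term vanishes by Assumption \ref{assume:tau1_out} together with $\lamn_i = w_i\tauisn \to \infty$ from Assumptions \ref{assume:spike}--\ref{assume:tier_rates}. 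Moreover $E[\tilde Z_{ij}^{4}]$ is uniformly bounded in $n$ because the prefactors $\tauifn/\lamn_i$ and $\tauisn/\lamn_i = 1/w_i$ stay bounded and the fourth moment of $z_{ij}$ is finite. A standard Borel--Cantelli argument applied to the fourth central moment, which is $O(n^{-2})$ by independence across $j$, then yields $\frac{1}{n}\sum_j \tilde Z_{ij}^{2} \to 1$ a.s. The off-diagonal terms $i \neq l$ have mean zero by the independence of distinct rows of $\Y$ and are handled identically.

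Once $M_n \to I_K$ a.s., for arbitrary $\epsilon > 0$ and all sufficiently large $n$ we have $(1-\epsilon)I_K \preceq M_n \preceq (1+\epsilon)I_K$ in the Loewner order. Conjugating by $D_n$ on both sides (which preserves the order since $D_n$ is PSD) gives $(1-\epsilon)D_n^{2} \preceq \A^{*} \preceq (1+\epsilon)D_n^{2}$, and by the Courant--Fischer min-max characterization, ordered eigenvalues are monotone under the Loewner order. Since $\lam_k(D_n^{2}) = \lamn_k$ (the sorted diagonal entries of $D_n^{2}$), we conclude $(1-\epsilon)\lamn_k \leq \lam_k(\A^{*}) \leq (1+\epsilon)\lamn_k$ for each $k \leq K$. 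Letting $\epsilon \downarrow 0$ yields $\lam_k(\A)/\lamn_k \to 1$ a.s.

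The principal obstacle is the diagonal case for $i \in \iout$: establishing almost-sure convergence for a triangular array of mixture variables whose two components have sharply different variances growing with $n$. The bounded fourth-moment assumption on $z_{ij}$ in model (\ref{eq:mixture}) is exactly what is needed to make the Borel--Cantelli step go through; without it one would only obtain convergence in probability.
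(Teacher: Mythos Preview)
Your Loewner-order strategy is a clean alternative to the paper's approach. The paper instead sandwiches $\lam_k(\A)$ between $\lam_1\big(\tfrac{1}{n}\tilY_k\tilY_k^{T}\big)$ and $\lam_1(\A_k)$, where $\A_k=\tfrac{1}{n}\sum_{i=k}^{K}\tilY_i\tilY_i^{T}$, via Weyl's inequality and a rank argument, and then shows each bound converges by analyzing the relevant dual matrices entrywise. Your route---show $M_n\to I_K$, then conjugate the Loewner inequality $(1-\epsilon)I\preceq M_n\preceq(1+\epsilon)I$ by $D_n$ and read off ordered eigenvalues by Courant--Fischer---is more symmetric and arguably tidier. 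The entrywise computations required are essentially the same in both proofs.

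There is, however, a genuine gap in your diagonal step for $i\in\iout$. You claim that bounded $E[\tilde Z_{ij}^{4}]$ makes the fourth central moment of $\tfrac{1}{n}\sum_j\tilde Z_{ij}^{2}$ of order $O(n^{-2})$. But the summands here are $W_j=\tilde Z_{ij}^{2}-\mu_n$, and the $O(n^{-2})$ bound on $E\big[(\tfrac{1}{n}\sum_jW_j)^{4}\big]$ requires $E[W_j^{4}]$ to be bounded, i.e.\ a bounded \emph{eighth} moment of $\tilde Z_{ij}$ and hence of $z_{ij}$. Only a finite fourth moment of $z_{ij}$ is assumed in model~(\ref{eq:mixture}), so the Borel--Cantelli step as written does not go through. (Your off-diagonal argument is fine: there the summands are $\tilde Z_{ij}\tilde Z_{lj}$ with $i\neq l$, and $E[(\tilde Z_{ij}\tilde Z_{lj})^{4}]=E[\tilde Z_{ij}^{4}]E[\tilde Z_{lj}^{4}]$ is indeed controlled by fourth moments.)

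The fix is exactly the decomposition the paper uses: write $y_{ij}^{2}=\tauifn(1-B_{ij})z_{ij}^{2}+\tauisn B_{ij}z_{ij}^{2}$ with $B_{ij}$ the Bernoulli mixture indicator, so that
\[
\frac{1}{n}\sum_{j}\tilde Z_{ij}^{2}=\frac{\tauifn}{\lamn_i}\cdot\frac{1}{n}\sum_j(1-B_{ij})z_{ij}^{2}+\frac{\tauisn}{\lamn_i}\cdot\frac{1}{n}\sum_j B_{ij}z_{ij}^{2}.
\]
Each average on the right is over an i.i.d.\ sequence whose law does \emph{not} depend on $n$, so the classical SLLN applies with only a first-moment assumption; the deterministic prefactors converge to $0$ and $1/w_i$ respectively, giving the limit $1$. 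With this repair, your $M_n\to I_K$ holds almost surely under the stated assumptions and the rest of your argument is valid. (A minor remark: your identification $\lam_k(D_n^{2})=\lamn_k$ presumes the $\lamn_k$ are sorted; under Assumptions~\ref{assume:spike}--\ref{assume:tier_rates} this holds asymptotically up to a ratio tending to $1$, which suffices.)
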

\begin{proof}
	Define $\A_{k}=\frac{1}{n}\sum_{i=k}^{K}\tilY_i \tilY_i^{T}$ with $\A_{k,D}$ being its dual matrix and $\A_{k,R}=\A-\A_{k}$. Then, 
\begin{eqnarray}\label{eq:dual_AB}
	\eigval{1}{\frac{1}{n}\tilY_k \tilY_k^{T}} + \eigval{n}{\frac{1}{n}\sum_{i=k+1}^{K}\tilY_i \tilY_i^T} \leq \eigval{k}{\A} \leq \eigval{1}{\A_k} + \eigval{k}{\A_{k,R}}
\end{eqnarray}
where the upper bound follows from Lemma \ref{lem:Weyl_ineq} and the lower bound from the expression (5.9) in \cite{jung2009pca}. Since the rank of $\frac{1}{n}\sum_{i=k+1}^{K}\tilY_i \tilY_i^T$ is less than $K<n$, $\eigval{n}{\frac{1}{n}\sum_{i=k+1}^{K}\tilY_i \tilY_i^T} $ should be zero. Likewise, the rank of $\A_{k,R}$ is at most $k-1$, and thus $\eigval{k}{\A_{k,R}}=0$. Therefore, we get
\begin{eqnarray}\label{eq:ineq_A}
	\eigval{1}{\frac{1}{n}\tilY_k \tilY_k^{T}} \leq \eigval{k}{\A} \leq \eigval{1}{\A_k}.
\end{eqnarray} 
By dividing (\ref{eq:ineq_A}) by $\lamn_k$, the inequality becomes 
\begin{eqnarray}\label{eq:ineq_A2}
		\frac{1}{\lamn_k}\eigval{1}{\frac{1}{n}\tilY_k \tilY_k^{T}} \leq \frac{1}{\lamn_k}\eigval{k}{\A} \leq \frac{1}{\lamn_k}\eigval{1}{\A_k}.
\end{eqnarray} 
We now show that the left hand side converges to $1$. Note that $\eigval{1}{\frac{1}{n}\tilY_k \tilY_k^{T}}=\eigval{1}{\frac{1}{n}\tilY_k^{T} \tilY_k}$ and thus we show the convergence of $\frac{1}{\lamn_k}\frac{1}{n}\tilY_k^{T}\tilY_k$. For $k=1,\cdots, d$, let $s_k = \{1 \leq j \leq n: y_{kj} = \sqrt{\taun{k,2}}z_{kj}\}$, which is an index set containing sample indices from the second component in the mixture model (\ref{eq:mixture}) corresponding to the direction $U_k$. Then,
\begin{eqnarray}\label{eq:diagonal}
	\frac{1}{n}\tilY_k^{T}\tilY_k & = & \frac{1}{n}\sum_{j=1}^{n} y_{kj}^2 \nonumber \\
	& = & \frac{1}{n}\sum_{j \in s_k^{c}} \taun{k,1}z_{kj}^2 + \frac{1}{n}\sum_{j \in s_k} \taun{k,2}z_{kj}^2  \nonumber \\
	& = & \taun{k,1}\frac{|s_k^{c}|}{n}\frac{1}{|s_k^{c}|}\sum_{j \in s_k^{c}} z_{kj}^2 + \taun{k,2}\frac{|s_k|}{n}\frac{1}{|s_k|}\sum_{j \in s_k} z_{kj}^2
\end{eqnarray}
where $|s|$ is the cardinality of a set $s$. If $k \in \imain$, then $s_k = \emptyset$ and $\lamn_k = \taun{k,1}$, and thus it follows from the law of large number that $\frac{1}{\lamn_k}\frac{1}{n}\tilY_k^{T}\tilY_k \rightarrow 1$ almost surely. If $k \in \iout$, then we have $\lamn_k = w_k \taun{k,2}$, $\frac{|s_k^{c}|}{n} \rightarrow 1-w_k$, $\frac{|s_k|}{n} \rightarrow w_k$. Also, since $\frac{\taun{k,1}}{\taun{k,2}}\rightarrow 0$ as $n \rightarrow \infty$, the convergence $\frac{1}{\lamn_k}\frac{1}{n}\tilY_k^{T}\tilY_k \rightarrow 1$ for $k \in \iout$ also follows from the law of large numbers. Hence we conclude that 
\begin{eqnarray}\label{eq:leftconv}
	\frac{1}{\lamn_k}\eigval{1}{\frac{1}{n}\tilY_k \tilY_k^{T}} \rightarrow 1~~\mathrm{a.s.}
\end{eqnarray}
for $k=1, \cdots, K$.

Next, we show that the right hand side of (\ref{eq:ineq_A2}) also converges to $1$. Let $\A_{k,D}$ be the dual matrix of $\A_k$. Then, it can be written as
\begin{eqnarray*}
	\A_{k,D} = \frac{1}{n} \left(  \begin{array}{c c c c}
\tilY_k^{T}\tilY_k & \tilY_k^{T}\tilY_{k+1} & \cdots & \tilY_k^{T}\tilY_{K} \\
              &            &                              &                 \\
\vdots   &\ddots  &                              &   \vdots                 \\
                 &                              &             &                 \\
  \tilY_K^{T}\tilY_k     &           \cdots         &            &  \tilY_K^{T}\tilY_K
\end{array} \right). 
\end{eqnarray*}
Then, one can show that
\begin{eqnarray*}
	\frac{1}{\lamn_k}\A_{k,D} \longrightarrow \left( \begin{array}{c c c c}
 1 & 0 & \cdots & 0 \\
 0 & b_{k+1,k} & \cdots & 0 \\
 \vdots & 	& \ddots & \vdots \\
 0 & \cdots &    & b_{K,k}
 \end{array} \right)
\end{eqnarray*}
where $b_{l,k}=\lim_{n\rightarrow \infty}\frac{\lamn_{l}}{\lamn_{k}}$. The above convergence of each element of $\A_{k,D}$ can be obtained in a similar way as in (\ref{eq:diagonal}) and thus is omitted. It follows from $b_{l,k} \leq 1$ for $l \geq k$ that the largest eigenvalue of $\frac{1}{\lamn_k}\A_{k,D}$ converges almost surely $1$. Note that $\frac{1}{\lamn_k}\eigval{1}{\A_{k}} = \frac{1}{\lamn_k}\eigval{1}{\A_{k,D}}= \eigval{1}{\frac{1}{\lamn_k}\A_{k,D}} $ and thus it follows that
\begin{eqnarray}\label{eq:rightconv}
	\frac{1}{\lamn_{k}}\eigval{1}{\A_k} \rightarrow 1~~\mathrm{a.s.}
\end{eqnarray}
By (\ref{eq:leftconv}) and (\ref{eq:rightconv}), we conclude that
\begin{eqnarray*}
	\frac{1}{\lamn_{k}}\eigval{k}{\A} \rightarrow 1~~\mathrm{a.s.}
\end{eqnarray*}
\end{proof}

\begin{lemma}\label{lem:conv_B}
	As $n \rightarrow \infty$, we have 
	\begin{eqnarray}
	\eigval{\max}{\B} \rightarrow c_{\lam}(1+\sqrt{c})^2 ~ \mathrm{and} ~ \eigval{\min}{\B} \rightarrow c_{\lam}(1-\sqrt{c})^2~~\mathrm{a.s.} & \mathrm{if} & \frac{d}{n} \rightarrow c < \infty \nonumber \\
	\frac{n}{d}\eigval{\max}{\B} ~ \mathrm{and}~ \frac{n}{d}\eigval{\min}{\B} \rightarrow c_{\lam}~~\mathrm{a.s.} & \mathrm{if} & \frac{d}{n} \rightarrow \infty.
\end{eqnarray}
\end{lemma}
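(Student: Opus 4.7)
The plan is to reduce the claim to Bai--Yin's law (Lemma~\ref{lem:Bai-Yin}) by rewriting $\B$ in Wishart form and exploiting the near-identical distribution of the non-spike entries of $\Y$. Write $\B=\frac{1}{n}\Y_{-K}^{T}\Y_{-K}$, where $\Y_{-K}$ is the $(d-K)\times n$ block of $\Y$ formed by the non-spike rows; then $\B$ shares its nonzero spectrum with the $(d-K)\times(d-K)$ dual $\frac{1}{n}\Y_{-K}\Y_{-K}^{T}$, and we may work with whichever matrix is more convenient. By Assumption~\ref{assume:nonspike}, for every $i>K$ the entry $y_{ij}$ factors as $b_{ij}^{(n)}z_{ij}$ with $b_{ij}^{(n)}\in\{\sqrt{\tauifn},\sqrt{\tauisn}\}$ selected by an independent Bernoulli, and
\begin{equation*}
\epsilon_{n}\;:=\;\max\bigl(\bigl|\sqrt{\tauifn}-\sqrt{\clam}\bigr|,\;\bigl|\sqrt{\tauisn}-\sqrt{\clam}\bigr|\bigr)\;\to\;0
\end{equation*}
uniformly in $i\in H_{M+1}$ (the rates in Assumption~\ref{assume:nonspike} depend on $n$ but not on $i$). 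Hence $\Y_{-K}$ is a small entrywise perturbation of $\sqrt{\clam}\tilZ_{-K}$, where $\tilZ_{-K}$ is the $(d-K)\times n$ array of the $z_{ij}$, which has i.i.d.\ mean-zero unit-variance entries with bounded fourth moment.

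The second step is a direct application of Lemma~\ref{lem:Bai-Yin} to the idealised matrix $\tilde{\B}:=\frac{\clam}{n}\tilZ_{-K}^{T}\tilZ_{-K}$. When $d/n\to c\in[0,\infty)$, take $V=\tilZ_{-K}$ with $p=d-K$ and $q=n$ (so $p/q\to c$): the extreme nonzero eigenvalues of $\frac{1}{n}\tilZ_{-K}\tilZ_{-K}^{T}$ converge almost surely to $(1\pm\sqrt{c})^{2}$, whence those of $\tilde{\B}$ converge to $\clam(1\pm\sqrt{c})^{2}$. When $d/n\to\infty$, swap the roles by taking $V=\tilZ_{-K}^{T}$ with $p=n$ and $q=d-K$, so that $p/q\to 0$; then the extreme eigenvalues of $\frac{1}{d-K}\tilZ_{-K}^{T}\tilZ_{-K}$ converge a.s.\ to $1$, and multiplying by $\clam(d-K)/n$ gives $\frac{n}{d}\eigval{\max}{\tilde{\B}}$ and $\frac{n}{d}\eigval{\min}{\tilde{\B}}$ both converging to $\clam$.

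The main obstacle is the third step: transferring these limits from $\tilde{\B}$ to $\B$ itself. The plan is a perturbation bound in operator norm combined with Weyl's inequality (Lemma~\ref{lem:Weyl_ineq}). Setting $E_{i}^{(n)}=\mathrm{diag}(b_{i1}^{(n)},\ldots,b_{in}^{(n)})-\sqrt{\clam}I$, one expands
\begin{equation*}
n(\B-\tilde{\B})\;=\;\sum_{i>K}\Bigl[E_{i}^{(n)}\tilZ_{i}\tilZ_{i}^{T}E_{i}^{(n)}+\sqrt{\clam}\bigl(E_{i}^{(n)}\tilZ_{i}\tilZ_{i}^{T}+\tilZ_{i}\tilZ_{i}^{T}E_{i}^{(n)}\bigr)\Bigr].
\end{equation*}
The quadratic term equals $W^{T}W$, where $W$ is the $(d-K)\times n$ matrix with independent mean-zero entries $(b_{ij}^{(n)}-\sqrt{\clam})z_{ij}$ of variance $O(\epsilon_{n}^{2})$; a second invocation of Bai--Yin applied to $W/\epsilon_{n}$ yields $\|W^{T}W/n\|=O\bigl(\epsilon_{n}^{2}(1+d/n)\bigr)$ almost surely, and the cross terms are bounded by $O(\epsilon_{n})$ times $\|\tilZ_{-K}\tilZ_{-K}^{T}/n\|$ (itself controlled by the step-two Bai--Yin estimate). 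In the regime $c<\infty$ this forces $\|\B-\tilde{\B}\|\rightarrowas 0$, while in the regime $c=\infty$ the same estimate after division by $d/n$ gives $\|(n/d)(\B-\tilde{\B})\|\rightarrowas 0$. Weyl's inequality then transfers the eigenvalue limits of $\tilde{\B}$ to $\B$, closing the argument in both regimes.
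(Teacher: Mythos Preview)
Your route differs from the paper's. The paper never forms the perturbation $\B-\tilde{\B}$; instead it sandwiches the eigenvalues of $\B$ directly between scalar multiples of the i.i.d.\ Wishart $\B^{*}=\frac{1}{n}\sum_{i>K}\tilZ_{i}\tilZ_{i}^{T}$, obtaining (via Weyl's inequality and positive-semidefinite correction terms) the two-sided bound
\[
\Bigl(\min_{i>K}\taun{i,1}\Bigr)\,\eigval{k}{\B^{*}}\;\le\;\eigval{k}{\B}\;\le\;\Bigl(\max_{i>K}\taun{i,2}\Bigr)\,\eigval{k}{\B^{*}}.
\]
Since both scalar factors tend to $\clam$ under Assumption~\ref{assume:nonspike}, a single application of Bai--Yin to the genuinely i.i.d.\ matrix $\B^{*}$ finishes the argument. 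Your additive-perturbation approach is conceptually fine and more flexible in principle, but it trades a one-sided eigenvalue comparison for an operator-norm bound on the error term, which is where the difficulty enters.

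There is a genuine gap in your third step. You invoke Lemma~\ref{lem:Bai-Yin} for $W/\epsilon_{n}$ to get $\|W^{T}W/n\|=O\bigl(\epsilon_{n}^{2}(1+d/n)\bigr)$, but that lemma requires i.i.d.\ entries. The entries $\frac{b_{ij}^{(n)}-\sqrt{\clam}}{\epsilon_{n}}\,z_{ij}$ are independent and centred, yet not identically distributed: in the model~(\ref{eq:mixture}) the mixture weight $w_{i}$ and the pair $(\taun{i,1},\taun{i,2})$ are allowed to vary with the row index $i>K$, so the variance of each entry (at most $1$, but not necessarily equal to $1$) and its law depend on $i$. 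The bound you state is plausible, but it needs a result for independent non-identically-distributed entries --- e.g.\ Lata{\l}a's inequality or a variance-profile extension of Bai--Yin --- not Lemma~\ref{lem:Bai-Yin} as written. Alternatively, you can bypass the issue by adopting the paper's sandwiching device, which only ever applies Bai--Yin to the i.i.d.\ matrix $\B^{*}$ and so never needs spectral control of a non-i.i.d.\ array.
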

\begin{proof}
Let $\tilYsub{i}{1}=\big( y_{ij}I_{\{j \in s_i^c \}}\big)_{1 \leq j \leq n}$ be an $n$-dimensional vector whose elements are $y_{ij}$ if $j \in s_i^c$ and $0$ if $j \in s_i$ and $\tilYsub{i}{2}=\tilY_i - \tilYsub{i}{1}$. Similarly, define $\tilZsub{i}{1}$ and $\tilZsub{i}{2}$ with $\tilZ_i = (z_{i1}, \cdots, z_{in})^{T}$. Then,
\begin{eqnarray}\label{eq:B}
	\B & = & \frac{1}{n}\sum_{i=K+1}^{d}\tilY_i \tilY_i^{T} \nonumber \\
	& = & \frac{1}{n}\sum_{i=K+1}^{d}\big( \tilYsub{i}{1}\tilYsub{i}{1}^{T} +  \tilYsub{i}{2}\tilYsub{i}{2}^{T} \big) \nonumber \\
	& = & \frac{1}{n}\sum_{i=K+1}^{d}\big( \taun{i,1} \tilZsub{i}{1}\tilZsub{i}{1}^{T} +  \taun{i,2} \tilZsub{i}{2}\tilZsub{i}{2}^{T} \big)	
\end{eqnarray}
Without loss of generality, assume that $\taun{i,1} \leq \taun{i,2}$ for all $n$ and all $i=K+1, \cdots, d$. Then,
\begin{eqnarray*}
	\B + \frac{1}{n} \sum_{i=K+1}^{d} (\taun{i,2}-\taun{i,1})\tilZsub{i}{1}\tilZsub{i}{1}^{T} = \frac{1}{n}\sum_{i=K+1}^{d} \taun{i,2}\tilZ_i \tilZ_i^{T}.
\end{eqnarray*}  
It follows from the Weyl inequality that 
\begin{eqnarray*}
	\eigval{k}{\B} + \eigval{n}{\frac{1}{n} \sum_{i=K+1}^{d} (\taun{i,2}-\taun{i,1})\tilZsub{i}{1}\tilZsub{i}{1}^{T}} \leq \eigval{k}{\frac{1}{n}\sum_{i=K+1}^{d} \taun{i,2}\tilZ_i \tilZ_i^{T}}. 
\end{eqnarray*}
Note that $\frac{1}{n} \sum_{i=K+1}^{d} (\taun{i,2}-\taun{i,1})\tilZsub{i}{1}\tilZsub{i}{1}^{T}$ is nonnegative definite and thus we have
\begin{eqnarray}\label{eq:B_rightineq_1}
	\eigval{k}{\B} \leq \eigval{k}{\frac{1}{n}\sum_{i=K+1}^{d} \taun{i,2}\tilZ_i \tilZ_i^{T}}. 	
\end{eqnarray}
Also, since 
\begin{eqnarray*}
	\frac{1}{n}\sum_{i=K+1}^{d} \taun{i,2}\tilZ_i Z_i^{T} + \frac{1}{n}\sum_{i=K+1}^{d} (\max_{K+1 \leq i \leq d}\taun{i,2} - \taun{i,2})\tilZ_i \tilZ_i^{T} = \frac{1}{n}\sum_{i=K+1}^{d} \max_{K+1 \leq i \leq d}\taun{i,2} \tilZ_i \tilZ_i^{T}. 
\end{eqnarray*}
Again, the Weyl inequality and the nonnegativity of $\frac{1}{n}\sum_{i=K+1}^{d} (\max_{K+1 \leq i \leq d}\taun{i,2} - \taun{i,2})\tilZ_i \tilZ_i^{T}$ yield the following inequality
\begin{eqnarray*}
	\eigval{k}{\frac{1}{n}\sum_{i=K+1}^{d} \taun{i,2}\tilZ_i \tilZ_i^{T}} \leq \eigval{k}{\frac{1}{n}\sum_{i=K+1}^{d} \max_{K+1 \leq i \leq d}\taun{i,2} \tilZ_i \tilZ_i^{T}}.
\end{eqnarray*} 
Hence, it follows from (\ref{eq:B_rightineq_1}) that 
\begin{eqnarray}\label{eq:B_rightineq}
	\eigval{k}{\B} \leq \eigval{k}{\frac{1}{n}\sum_{i=K+1}^{d} \max_{K+1 \leq i \leq d}\taun{i,2} \tilZ_i \tilZ_i^{T}}.
\end{eqnarray}
To get a lower bound of $\eigval{k}{B}$, we start with the following equality
\begin{eqnarray*}
		\B = \frac{1}{n}\sum_{i=K+1}^{d} \taun{i,1}\tilZ_i \tilZ_i^{T} + \frac{1}{n} \sum_{i=K+1}^{d} (\taun{i,2}-\taun{i,1})\tilZsub{i}{2}\tilZsub{i}{2}^{T} .
\end{eqnarray*}
from (\ref{eq:B}). Then, we obtain 
\begin{eqnarray}\label{eq:B_leftineq}
	\eigval{k}{\frac{1}{n}\sum_{i=K+1}^{d} \min_{K+1 \leq i \leq d}\taun{i,1} \tilZ_i \tilZ_i^{T}} \leq \eigval{k}{\B}
\end{eqnarray}
in a similar way to get (\ref{eq:B_rightineq}). 

By (\ref{eq:B_rightineq}) and (\ref{eq:B_leftineq}) and letting $\B^{*}=\frac{1}{n}\sum_{i=K+1}^{d}  \tilZ_i \tilZ_i^{T}$, it follows that
\begin{eqnarray}\label{eq:B_ineq}
	\min_{K+1 \leq i \leq d}\taun{i,1}\times \eigval{k}{\B^*} \leq \eigval{k}{\B} \leq \max_{K+1 \leq i \leq d}\taun{i,2} \times \eigval{k}{\B^*}.
\end{eqnarray}
By Lemma \ref{lem:Bai-Yin}, the convergence of the extreme eigenvalues of $\B^*$ can be obtained as follows:
\begin{eqnarray}\label{eq:B*conv}
	\eigval{\max}{\B^*} \rightarrow (1+\sqrt{c})^2 ~\mathrm{and}~\eigval{\min}{\B^*} \rightarrow (1-\sqrt{c})^2 & \mathrm{if} &\frac{d}{n} \rightarrow c < \infty \nonumber \\
	\frac{n}{d}\eigval{\max}{\B^*}~\mathrm{and}~\frac{n}{d}\eigval{\min}{\B^*} \rightarrow 1~~\mathrm{a.s.} & \mathrm{if} & \frac{d}{n} \rightarrow \infty. 
\end{eqnarray}
For details, see the proof of Lemma 6.4 in \cite{shen2016general}.

Since $\min_{K+1 \leq i \leq d}\taun{i,1}$ and $\max_{K+1 \leq i \leq d} \taun{i,2} \rightarrow c_{\lam}$ by Assumption \ref{assume:nonspike}, it follows from (\ref{eq:B_ineq}) and (\ref{eq:B*conv}) that 
\begin{eqnarray*}
	\eigval{\max}{\B} \rightarrow c_{\lam}(1+\sqrt{c})^2 ~ \mathrm{and} ~ \eigval{\min}{\B} \rightarrow c_{\lam}(1-\sqrt{c})^2~~\mathrm{a.s.} & \mathrm{if} & \frac{d}{n} \rightarrow c < \infty \nonumber \\
	\frac{n}{d}\eigval{\max}{\B} ~ \mathrm{and}~ \frac{n}{d}\eigval{\min}{\B} \rightarrow c_{\lam}~~\mathrm{a.s.} & \mathrm{if} & \frac{d}{n} \rightarrow \infty.
\end{eqnarray*}	
\end{proof}
\textit{Theorem \ref{thm1:eigenvalue} (a).} So far, we proved the first and second steps and now we prove the last step that completes the proof. It follows from the Weyl inequality and (\ref{eq:dual_AB}) that 
\begin{eqnarray}\label{eq:sandwich}
	\eigval{i}{\A} + \eigval{n}{\B} \leq \eigval{i}{\dual{\samcov}} \leq \eigval{i}{\A} + \eigval{1}{\B}~~\mathrm{for}~i=1, \cdots, \min(d,n).
\end{eqnarray}
Let us first consider the case when $0<c<\infty$. Then, the condition $\frac{d}{n\deln_M} \rightarrow 0$ yields $\deln_M \rightarrow \infty $, and thus $\frac{1}{\lamn_i}\eigval{n}{\B}$ and $\frac{1}{\lamn_i}\eigval{1}{\B} \rightarrow 0$ for $i \leq K$ by Lemma \ref{lem:conv_B}. According to Lemma \ref{lem:conv_A}, we conclude that for $i \leq K$ 
\begin{eqnarray}\label{eq:thm_main_a1}
	\frac{\hatlam_i}{\lamn_i} \rightarrow~ 1~~\mathrm{a.s.}
\end{eqnarray}
because $\hatlam_i = \eigval{i}{\samcov}=\eigval{i}{\dual{\samcov}}$.
For $i > K$, $\eigval{i}{\A}=0$ since the rank of $\A$ is less than or equal to $K$. Then, by Lemma \ref{lem:conv_B}, 
\begin{eqnarray}\label{eq:thm_main_a1_nonspike}
	c_{\lam}(1-\sqrt{c})^2 \leq \liminf \hatlam_i \leq \limsup \hatlam_i \leq c_{\lam}(1+\sqrt{c})^2 ~~\mathrm{for}~i > K. 
\end{eqnarray}

Next, consider the case when $c=\infty$. By the condition $\frac{d}{n\deln_M} \rightarrow 0$, $\frac{1}{\lamn_i}\eigval{n}{\B}=\frac{n}{d}\eigval{n}{\B}\times \frac{d}{n\deln_M} \times \frac{\deln_M}{\lamn_i} \rightarrow 0$ for $i \leq K$. Similarly, $\frac{1}{\lamn_i}\eigval{1}{\B} \rightarrow 0$ for $i \leq K$. Thus, we can conclude (\ref{eq:thm_main_a1}) for $i \leq K$. For $i > K$, it follows from $\eigval{i}{\A}=0$ and Lemma \ref{lem:conv_B} that 
\begin{eqnarray*}
\frac{n}{d}\hatlam_{K+1} \rightarrow c_{\lam} ~~~\mathrm{and}~~~\frac{n}{d}\hatlam_n \rightarrow c_{\lam}~~\mathrm{a.s.},
\end{eqnarray*}
which gives
\begin{eqnarray*}
\frac{n}{d}\hatlam_i \rightarrow c_{\lam}~~\mathrm{a.s.} ~~~\mathrm{for}~~i > K.
\end{eqnarray*}

Lastly, consider the case when $c=0$. In this case, the condition $\frac{d}{n\deln_M} \rightarrow 0$ does not guarantee $\deln_M \rightarrow \infty $ so that we divide the case into two sub-cases: $\deln_M \rightarrow \infty $ and $\deln_M < \infty $. When $\deln_M \rightarrow \infty $, (\ref{eq:thm_main_a1}) follows similarly to the case when $0<c<\infty$. When $\deln_M < \infty $, according to Theorem 1 ($c=0$) of \cite{baik2006eigenvalues} as mentioned in \cite{shen2016general}, (\ref{eq:thm_main_a1}) still follows for $i \leq K$. For $i > K$, it is easy to see (\ref{eq:thm_main_a1_nonspike}) with $c=0$, that is, 
\begin{eqnarray*}
	c_{\lam} \leq \liminf \hatlam_i \leq \limsup \hatlam_i \leq c_{\lam} ~~\mathrm{for}~i > K.	
\end{eqnarray*}
Hence, we have $\hatlam_i \rightarrow c_{\lam}$ for $i>K$. This completes the proof. 

\textit{Theorem \ref{thm1:eigenvalue} (b).} As mentioned earlier, the condition $\frac{d}{n\deln_{h+1}} \rightarrow \infty$ implies $\frac{d}{n} \rightarrow \infty$. So, we only consider the case $\frac{d}{n} \rightarrow \infty$. 

\begin{itemize}
    \item[(i)] $i \leq p_h$: since $\frac{d}{n\deln_{h}} \rightarrow 0$, $\deln_{h} \rightarrow \infty$ and thus $\frac{\eigval{1}{\B}}{\lamn_i} \leq \frac{\eigval{1}{\B}}{\deln_h} \times \frac{\deln_h}{\lamn_{p_h}} \rightarrow_{a.s.} 0$.  Similarly, $\frac{\eigval{n}{\B}}{\lamn_i}  \rightarrow_{a.s.} 0$.  Then, (\ref{eq:thm_main_a1}) follows from (\ref{eq:sandwich}) for $i \leq p_h$. 
    
    \item[(ii)] $i > p_h$: (\ref{eq:sandwich}) can be re-expressed as 
    \begin{eqnarray*}
        \frac{n}{d}\eigval{i}{\A} + \frac{n}{d}\eigval{n}{\B} \leq \frac{n}{d}\hatlam_i \leq \frac{n}{d}\eigval{i}{\A} + \frac{n}{d}\eigval{1}{\B}.  
    \end{eqnarray*}
    We can easily see that $\frac{n}{d} \eigval{i}{\A} \leq \frac{n}{d} \eigval{p_h+1}{\A} = \frac{n\deln_{h+1}}{d} \times \frac{\eigval{p_h+1}{\A}}{\deln_{h+1}} \rightarrow_{a.s.} 0$ by the condition $\frac{d}{n\deln_{h+1}} \rightarrow \infty$. Therefore, it follows from Lemma \ref{lem:conv_B} that $\frac{n}{d}\hatlam_i \rightarrow_{a.s.} c_{\lam}$ for $i>p_h$.

\end{itemize}
\end{proof}

\subsection{Proof of Theorem \ref{thm2:eigenvector}}
\begin{proof}
\indent For the subspace consistency of the sample eigenvectors, $\hatU_i$ to the $S_m$, we want to show 
\begin{eqnarray}\label{eq:angle}
    \mathrm{angle}(\hatU_i, S_m) \rightarrow_{a.s.} 0. 
\end{eqnarray}
This is equivalent to showing that $\cos(\mathrm{angle}(\hatU_i, S_m))=\big( \sum_{k \in H_m} \hatU_i^{T} U_k U_k^{T}\hatU_i\big)^{1/2} \rightarrow_{a.s.} 1$ for $i \in H_m$ since $S_m = \mathrm{span}\{U_i\}_{i \in H_m}$. Without loss of generality, we can assume $U_k=e_k$ where the $k$th entry is 1 and the rest of entries are all zero. Then, $\hatU_i^{T}U_k$ is simply $\hatuki$ and therefore (\ref{eq:angle}) is equivalent to
\begin{eqnarray}\label{eq:subspace_sumsq}
    \sum_{k \in H_m}\hatuki^2 \rightarrow_{a.s.} 1~~~~~~\mathrm{for}~i\in H_m. 
\end{eqnarray}
In general, we will show (\ref{eq:subspace_sumsq}) for the subspace consistency, but different convergence rates will be achieved under each scenario. For the strong inconsistency of $\hatU_i$ to the true eigenvector $U_i$, we will show that the $\mathrm{angle}(\hatU_i, U_i) \rightarrow_{a.s.} \frac{\pi}{2}$, which is equivalent to showing that $\cos(\mathrm{angle}(\hatU_i, U_i))=\hatu_{ii} \rightarrow_{a.s.} 0$.

Before we prove the main parts, we first provide some important results that will be used in the proof of the main parts. Define $\Sbf=\matrootinv{\Lam} \hatUbf \matroot{\hatLambf}$ where $\Lam=\mathrm{diag}(\lamn_1, \cdots, \lamn_d) $, and then its element for is the $k$th row and $i$th column is $s_{ki} = \frac{\sqrt{\hatlam_i}}{\sqrt{\lamn_k}}\hatuki$. From (\ref{eq:mixture}), we obtain $\Sbf \Sbf^{T} = \frac{1}{n}\matrootinv{\Lam} \Y \Y^{T}\matrootinv{\Lam}$, and thus the $k$th diagonal element of $\Sbf \Sbf^{T}$, i.e.
\begin{eqnarray*}
(\Sbf \Sbf^{T})_{(k,k)} = \sumi s_{ki}^2 = \sumi \frac{\hatlam_i}{\lamn_k} \hatuki^2 = \frac{1}{\lamn_k}\sumi \hatlam_i \hatuki^2,
\end{eqnarray*}
 is equal to the $k$th diagonal element of $\frac{1}{n}\matrootinv{\Lam} \Y \Y^{T}\matrootinv{\Lam}=\meanj \frac{1}{\lamn_k}y_{kj}^2$. From (\ref{eq:diagonal}) and (\ref{eq:leftconv}), we get
\begin{eqnarray}\label{eq:S1}
    \frac{1}{\lamn_k}\sumi \hatlam_i \hatuki^2 = \meanj \frac{1}{\lamn_k}y_{kj}^2 \rightarrowas 1.
\end{eqnarray}

Since all diagonal values of a matrix should be less than its largest eigenvalue, it follows from $\lam_i(\Sbf\Sbf^{T}) = \lam_i(\Sbf^{T}\Sbf)$ that
\begin{eqnarray}\label{eq:S2}
    (\Sbf^{T}\Sbf)_{(i,i)} = \sum_{k=1}^{d} s_{ki}^2 = \sum_{k=1}^{d} \frac{\hatlam_i}{\lamn_k}\hatuki^2 = \hatlam_i \sum_{k=1}^{d} \frac{1}{\lamn_k} \hatuki^2 \leq \lam_1(\frac{1}{n}\matrootinv{\Lam}\Y \Y^{T}\matrootinv{\Lam}).
\end{eqnarray}
To use the above inequality in the proof of the main parts, the largest eigenvalue of $\frac{1}{n}\matrootinv{\Lam}\Y \Y^{T}\matrootinv{\Lam}$ is of interest. If there are no outlier components, $\matrootinv{\Lam}\Y$ will consist of i.i.d. random variables with zero mean, unit variance, and finite fourth moment. Then, Lemma \ref{lem:Bai-Yin} implies that 
\begin{eqnarray*}
\lam_1(\frac{1}{n}\matrootinv{\Lam}\Y \Y^{T}\matrootinv{\Lam}) & \rightarrowas & (1+\sqrt{c})^2 ~~~\mathrm{for} ~0 \leq c < \infty~~~\mathrm{and} \\
\lam_1(\frac{1}{d}\matrootinv{\Lam}\Y \Y^{T}\matrootinv{\Lam}) & \rightarrowas & (1+\frac{1}{\sqrt{c}})^2~~~\mathrm{for}~0 < c \leq \infty. 	
\end{eqnarray*}
In the case with outliers from the model (\ref{eq:mixture}), however, the entries of the $\matrootinv{\Lam}\Y$ are not identically distributed any longer, which brings same challenges. Here, we prove that the maximum eigenvalue of $\frac{1}{n}\matrootinv{\Lam}\Y \Y^{T}\matrootinv{\Lam}$ still has the same limit even though there are a few outliers from different distributions. 

\begin{lemma}\label{lem:max_eigenvalue} As $n,~d \rightarrow \infty$ such that $\frac{d}{n} \rightarrow c $, we have
\begin{eqnarray*}
 \lam_1 (\frac{1}{n}\matrootinv{\Lam}\Y \Y^{T}\matrootinv{\Lam}) \rightarrowas & (1+\sqrt{c})^2 ~~~~& 0 \leq \mathrm{c} < \infty \\
 \lam_1 (\frac{1}{d} \matrootinv{\Lam}\Y \Y^{T}\matrootinv{\Lam}) \rightarrowas & (1+\frac{1}{\sqrt{c}})^2 ~~~~& 0 < \mathrm{c} \leq \infty. 
\end{eqnarray*}
\end{lemma}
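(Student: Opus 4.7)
The plan is to recognize that $\W=\matrootinv{\Lam}\Y$ sits inside a mild extension of the Bai--Yin setting of Lemma \ref{lem:Bai-Yin}: the $n$ columns of $\W$ are i.i.d.\ random vectors in $\mathbb{R}^d$, and within each column the coordinates are independent with a common mean zero, a common variance one, and a uniformly bounded fourth moment. The claimed limits should then follow from the universality of the Marchenko--Pastur edge for sample covariance matrices with such samples, combined with a Cauchy interlacing lower bound.

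First I would verify the distributional properties. The entries $w_{ij}=y_{ij}/\sqrt{\lamn_i}$ are independent across $(i,j)$ by model (\ref{eq:mixture}), and since the law of each $w_{ij}$ depends only on $i$, the columns $\W_1,\ldots,\W_n$ are i.i.d.\ across $j$. Each entry has mean zero, and variance exactly one: for $i\notin\iout$ one has $w_{ij}=z_{ij}$, while for $i\in\iout$ direct computation of the scale mixture gives $\mathrm{Var}(w_{ij})=((1-w_i)\tauifn+w_i\tauisn)/\lamn_i=1$ by the definition $\lamn_i=(1-w_i)\tauifn+w_i\tauisn$. The fourth moment in the outlier case is
\begin{eqnarray*}
E[w_{ij}^4] \;=\; E[z_{ij}^4]\left\{(1-w_i)\left(\frac{\tauifn}{\lamn_i}\right)^2 + w_i\left(\frac{\tauisn}{\lamn_i}\right)^2\right\},
\end{eqnarray*}
and under Assumptions \ref{assume:tau1_out} and \ref{assume:spike} the ratios $\tauifn/\lamn_i$ and $\tauisn/\lamn_i$ stay bounded as $n\to\infty$ (the former tending to $0$ or a finite limit, the latter to $1/w_i$ when $\tauisn\to\infty$); since $w_i>0$ is fixed for each $i\in\iout$ and $|\iout|\leq K$ is finite, this gives $\sup_{i,j,n} E[w_{ij}^4]<\infty$.

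For the lower bound I would use Cauchy interlacing. Let $\W_0$ denote the $(d-|\iout|)\times n$ submatrix obtained by deleting the $\iout$ rows of $\W$; its entries are i.i.d.\ standard and $(d-|\iout|)/n\to c$, so Lemma \ref{lem:Bai-Yin} gives $\lam_1(\frac{1}{n}\W_0\W_0^T)\rightarrowas (1+\sqrt{c})^2$ when $c<\infty$ (and the analogous statement under the $1/d$ normalization when $c=\infty$). Since $\frac{1}{n}\W_0\W_0^T$ is a principal submatrix of $\frac{1}{n}\W\W^T$, Cauchy interlacing yields $\lam_1(\frac{1}{n}\W\W^T)\geq \lam_1(\frac{1}{n}\W_0\W_0^T)$, which supplies the correct lower bound.

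The matching upper bound is the main obstacle. A direct Weyl-style decomposition of the dual, $\frac{1}{n}\W^T\W=\frac{1}{n}\sum_{i\notin\iout}\tilZ_i\tilZ_i^T+\frac{1}{n}\sum_{i\in\iout}\tilde{W}_i\tilde{W}_i^T$, in the spirit of the proof of Lemma \ref{lem:conv_B}, only yields $(1+\sqrt{c})^2+O(1)$ because the rank-$|\iout|$ outlier contribution has top eigenvalue of order one that does not vanish upon normalization. Closing the gap requires the extension of Lemma \ref{lem:Bai-Yin} to sample covariance matrices whose samples (columns) are i.i.d.\ in $j$ but whose within-sample entries are only independent with common mean zero, common variance one, and uniformly bounded fourth moment. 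This universality of the Marchenko--Pastur edge is standard in the random matrix literature, and combined with the moment bounds verified above it delivers $\lam_1(\frac{1}{n}\W\W^T)\rightarrowas (1+\sqrt{c})^2$ for $0\leq c<\infty$. The $0<c\leq\infty$ statement follows from the same reasoning after interchanging the roles of $n$ and $d$ in the normalization.
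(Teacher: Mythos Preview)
Your approach differs from the paper's. The paper works with the dual $\frac{1}{d}\Y^T\Lam^{-1}\Y$ and uses an add-and-subtract decomposition on the first $K$ (spike) rows: letting $\checkYi$ denote the $i$th row of $\matrootinv{\Lam}\Y$, it writes
\[
\frac{1}{d}\Y^T\Lam^{-1}\Y \;=\; \frac{1}{d}\Big(\sum_{i=1}^{K}\checkYi\checkYi^T-\sum_{i=1}^{K}Z_iZ_i^T\Big)+\frac{1}{d}\Big(\sum_{i=1}^{K}Z_iZ_i^T+\sum_{i=K+1}^{d}\checkYi\checkYi^T\Big),
\]
applies Weyl's inequality, argues that the first (finite-rank) correction has top eigenvalue tending to zero via a trace computation (since $\frac{1}{n}\sum_j y_{ij}^2/\lamn_i$ and $\frac{1}{n}\sum_j z_{ij}^2$ both converge to $1$, their difference times $n/d$ vanishes), and handles the second piece by the sandwiching technique of Lemma~\ref{lem:conv_B}. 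This keeps everything within the classical Bai--Yin setting of Lemma~\ref{lem:Bai-Yin} as stated, rather than invoking an extension. Your appeal to edge universality for independent-but-not-identical rows is conceptually cleaner, but it offloads precisely the step the paper works out by hand; the paper's add-and-subtract shows directly that replacing the $K$ spike rows by i.i.d.\ standard rows perturbs the edge negligibly.

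One slip in your lower bound: in Section~\ref{ch_theory:asymptotics} the set $\iout$ is redefined as a subset of $\{1,\ldots,K\}$, so deleting only the $\iout$ rows still leaves non-spike rows $i>K$ that may carry $w_i>0$; those entries are scale mixtures, not i.i.d.\ standard (Assumption~\ref{assume:nonspike} makes the two mixture variances coincide only in the limit). Either delete all $K$ spike rows and sandwich the remaining block as in Lemma~\ref{lem:conv_B}, or simply note that the extended Bai--Yin result you already invoke for the upper bound gives the lower bound too, making the interlacing detour unnecessary.
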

\begin{proof}
Denote the $i$th row vector of $\matrootinv{\Lam}\Y$ by $\checkYi$. Then, $\frac{1}{d}\Y^T \Lam^{-1}\Y = \meani \checkYi \checkYi^{T} = \frac{1}{d}\sum_{i=1}^{K} \checkYi \checkYi^{T} + \frac{1}{d}\sum_{i>K}\checkYi \checkYi^{T}$. Recall that each element of $\checkYi$ is $\breve{y}_{ij} = \tauifn z_{ij}$ for $j \in s_{i}^c$ and $\breve{y}_{ij} = \tauisn z_{ij}$ for $j \in s_{i}$. Let $Z_i = (z_{i1}, \cdots, z_{in})^{T}$ for $i = 1, \cdots, d$, and then we have 
\begin{eqnarray}
    \frac{1}{d}\Y^T \Lam^{-1}\Y = \frac{1}{d}\Big( \sum_{i=1}^{K} \checkYi \checkYi^{T} - \sum_{i=1}^{K}Z_i Z_i^{T} \Big) + \frac{1}{d}\Big( \sum_{i=1}^{K}Z_i Z_i^{T} + \sum_{i=K+1}^{d} \checkYi \checkYi^{T}  \Big).
\end{eqnarray}
By the Weyl inequality, we have 
\begin{eqnarray}\label{eq:lem5_upper_main}
    \lam_1 (\meani \checkYi \checkYi^{T}) \leq \lam_1 \Big(\frac{1}{d}\Big( \sum_{i=1}^{K} \checkYi \checkYi^{T} - \sum_{i=1}^{K}Z_i Z_i^{T} \Big) \Big) + \lam_1 \Big(\frac{1}{d}\Big( \sum_{i=1}^{K}Z_i Z_i^{T} + \sum_{i=K+1}^{d} \checkYi \checkYi^{T}  \Big)\Big).
\end{eqnarray}
Letting $\checkYbf_K = [\checkY_1, \cdots, \checkY_K]$ and $\Z_K = [Z_1, \cdots, Z_K]$, we get $\frac{1}{d}\Big( \sum_{i=1}^{K} \checkYi \checkYi^{T} - \sum_{i=1}^{K}Z_i Z_i^{T} \Big) = \frac{1}{d}\Big( \checkYbf_K \checkYbf_K^{T} - \Z_K \Z_K^{T} \Big) $. Then, we have 
\begin{eqnarray*}
\lam_1 \Big(\frac{1}{d}\Big( \checkYbf_K \checkYbf_K^{T} - \Z_K \Z_K^{T} \Big) \Big) & \leq & \tr \Big( \frac{1}{d}\Big( \checkYbf_K \checkYbf_K^{T} - \Z_K \Z_K^{T} \Big) \Big) \\
& = &\frac{1}{d} \Big[ \tr \Big( \checkYbf_K \checkYbf_K^{T}\Big) - \tr \Big( \Z_K \Z_K^{T} \Big) \Big] \\
& = & \frac{1}{d} \Big[ \tr \Big( \checkYbf_K^{T} \checkYbf_K\Big) - \tr \Big( \Z_K^{T} \Z_K \Big) \Big] \\
& = & \frac{1}{d} \Big[ \sum_{i=1}^{K}\sumj\frac{1}{\lamn_i}y_{ij}^2 - \sum_{i=1}^{K}\sumj z_{ij}^2\Big] \\
& = & \frac{n}{d} \Big[ \sum_{i=1}^{K}\meanj\frac{1}{\lamn_i}y_{ij}^2 - \sum_{i=1}^{K}\meanj z_{ij}^2 \Big].    
\end{eqnarray*}
Because $\sum_{i=1}^{K}\meanj\frac{1}{\lamn_i}y_{ij}^2 - \sum_{i=1}^{K}\meanj z_{ij}^2 \rightarrowas 0$ and $\frac{n}{d} \rightarrow \frac{1}{c}$, we have 
\begin{eqnarray}\label{eq:lem5_upper_eq1}
    \lam_1 \Big(\frac{1}{d}\Big( \sum_{i=1}^{K} \checkYi \checkYi^{T} - \sum_{i=1}^{K}Z_i Z_i^{T} \Big) \Big) \rightarrowas 0~~~~\mathrm{for}~0 \leq \frac{1}{c} < \infty.
\end{eqnarray}
Based on techniques similar to the proof of Lemma \ref{lem:conv_B}, one can show that 
\begin{eqnarray}\label{eq:lem5_upper_eq2}
    \lam_1 \Big(\frac{1}{d}\Big( \sum_{i=1}^{K}Z_i Z_i^{T} + \sum_{i=K+1}^{d} \checkYi \checkYi^{T}  \Big)\Big) \rightarrowas (1+\frac{1}{\sqrt{c}})^2.
\end{eqnarray}
By (\ref{eq:lem5_upper_main}), (\ref{eq:lem5_upper_eq1}), and (\ref{eq:lem5_upper_eq2}), we have
\begin{eqnarray}\label{eq:lem5_upper_result}
    \lam_1 (\meani \checkYi \checkYi^{T}) \leq (1+\frac{1}{\sqrt{c}})^2~~~~\mathrm{almost~surely}.
\end{eqnarray}
For the lower bound, application of Weyl inequality on the other way gives
\begin{eqnarray*}
  \lam_n \Big(\frac{1}{d}\Big( \sum_{i=1}^{K} \checkYi \checkYi^{T} - \sum_{i=1}^{K}Z_i Z_i^{T} \Big) \Big) + \lam_1 \Big(\frac{1}{d}\Big( \sum_{i=1}^{K}Z_i Z_i^{T} + \sum_{i=K+1}^{d} \checkYi \checkYi^{T}  \Big)\Big)  \leq \lam_1 (\meani \checkYi \checkYi^{T}).
\end{eqnarray*}
Since the rank of $\frac{1}{d}\Big( \sum_{i=1}^{K} \checkYi \checkYi^{T} - \sum_{i=1}^{K}Z_i Z_i^{T} \Big)$ is less than or equal to $K$, we have $\lam_n\Big(\frac{1}{d}\Big( \sum_{i=1}^{K} \checkYi \checkYi^{T} - \sum_{i=1}^{K}Z_i Z_i^{T} \Big)\Big)=0$, which with (\ref{eq:lem5_upper_eq2}) gives 
\begin{eqnarray}\label{eq:lem5_lower_result}
    (1+\frac{1}{\sqrt{c}})^2 \leq \lam_1 (\meani \checkYi \checkYi^{T})~~~~\mathrm{almost~surely}.
\end{eqnarray}
A combination of (\ref{eq:lem5_upper_result}) and (\ref{eq:lem5_lower_result}) completes the proof. 
\end{proof}

Now we start to prove Theorem \ref{thm2:eigenvector}. We will first prove (b) and move on to (a). 

\textbf{proof of (b).} Assumption \ref{assume:tier_rates} and $\frac{d}{n\deln_{h+1}} \rightarrow \infty$ together imply $\dn \rightarrow \infty$. So here we only consider the case of $\dn \rightarrow \infty$. The proof consists of the following three steps:

\begin{enumerate}
    \item Establish the convergence for the $h$-th tier.
    \item Establish the convergence for the $m$-th tier sequentially from $m=h-1$ to $1$.
    \item Establish the strong inconsistency of the remaining sample eigenvectors.
\end{enumerate}
We start with the first step.

1. Establish the convergence for the $h$-th tier, i.e. $\mathrm{angle}(\hat{U}_i, S_h) = o(\big\{ \delfrac{h}{h-1} \big\}^{1/2}) \vee O(\big\{ \frac{d}{n\deln_{h}} \big\}^{1/2})$ for $i \in H_h$. As discussed earlier, we need to show the following: 
\begin{eqnarray}\label{eq:thm2_b_step1_main}
    \sum_{k \in H_h} \hatuki^2 = 1+o\Big( \delfrac{h}{h-1} \Big) \vee O\Big( \frac{d}{n\deln_{h}} \Big) ~~~~\mathrm{for}~i \in H_h. 
\end{eqnarray}
This can be proved by showing the following two equations:
\begin{eqnarray}\label{eq:thm2_bi_bullet2_eq1}
    \sum_{i=p_h+1}^{d}\hatuki^2 = \bigo{\frac{d}{n\deln_h}}~~~~\mathrm{for}~k \in H_h
\end{eqnarray}
and
\begin{eqnarray}\label{eq:thm2_bi_bullet2_lasteq3}
    \sum_{m=1}^{h-1}\sum_{i \in H_m}\hatuki^2 = O\Big( \frac{\deln_{h}}{\deln_{h-1}} \Big)~~~~\mathrm{for}~k \in H_h.
\end{eqnarray}
Since $\sum_{i=p_h+1}^{d}\hatuki^2  \leq \sum_{k=1}^{p_h}\sum_{i=p_h+1}^{d}\hatuki^2 = \sum_{k=p_h+1}^{d}\sum_{i=1}^{p_h}\hatuki^2 $ and $p_h$ is finite, the first equation (\ref{eq:thm2_bi_bullet2_eq1}) is equivalent to
\begin{eqnarray}\label{eq:thm2_bi_bullet2_eq2}
    \sum_{k=p_{h}+1}^{d}\sum_{i=1}^{p_h}\hatuki^2 = O\Big( \frac{d}{n\deln_h} \Big)~~~~\mathrm{for}~m=1, \cdots, h    
\end{eqnarray}
for $m=h$. The proof of (\ref{eq:thm2_bi_bullet2_eq2}) is equivalent to showing the following two equations:
\begin{eqnarray}\label{eq:thm2_bi_bullet2_lasteq1}
    \sum_{k=K+1}^{d}\sum_{i=1}^{p_m}\hatuki^2 = O\Big( \frac{d}{n\deln_m} \Big)~~~~\mathrm{for}~m=1, \cdots, h    
\end{eqnarray}
and
\begin{eqnarray}\label{eq:thm2_bi_bullet2_lasteq2}
    \sum_{k=p_h+1}^{K}\sum_{i=1}^{p_m}\hatuki^2 = O\Big( \frac{\deln_{h+1}}{\deln_{m}} \Big)~~~~\mathrm{for}~m=1, \cdots, h.
\end{eqnarray}
For details, see \cite{shen2016general}. Thus, in order to show (\ref{eq:thm2_b_step1_main}), it is enough to show (\ref{eq:thm2_bi_bullet2_lasteq3}), (\ref{eq:thm2_bi_bullet2_lasteq1}), (\ref{eq:thm2_bi_bullet2_lasteq2}). We start with (\ref{eq:thm2_bi_bullet2_lasteq1}).

\emph{proof of (\ref{eq:thm2_bi_bullet2_lasteq1}).} For $m=1, \cdots, h$, from $\frac{\hatlam_{p_m}}{\lamn_{K+1}} \sum_{k=K+1}^{d}\sum_{i=1}^{p_m}\hatuki^2 \leq \sum_{k=K+1}^{d}\frac{1}{\lamn_k}\sum_{i=1}^{p_m}\hatlam_{i}\hatuki^2$,
\begin{eqnarray}\label{eq:thm2_bi_bullet2_lasteq1_bound}
     \sum_{k=K+1}^{d}\sum_{i=1}^{p_m}\hatuki^2 & \leq & \frac{\lamn_{K+1}}{\hatlam_{p_m}}\sum_{i=1}^{p_m}\hatlam_{i}\sum_{k=K+1}^{d}\frac{1}{\lamn_k}\hatuki^2  \nonumber \\
     & \leq & \frac{d}{n\deln_m} \frac{\deln_m}{\hatlam_{p_m}}\frac{n\lamn_{K+1}}{d}\sum_{i=1}^{p_m}\hatlam_i \sum_{k=1}^{d}\frac{1}{\lamn_k} \hatuki^2 \nonumber \\
     & \leq & \frac{d}{n\deln_m} \frac{\deln_m}{\hatlam_{p_m}}\frac{n\lamn_{K+1}}{d}p_m \lam_1(\frac{1}{n}\matrootinv{\Lam}\Y \Y^{T}\matrootinv{\Lam})~~~~\mathrm{by}~(\ref{eq:S2}) \nonumber \\
     & = & \frac{d}{n\deln_m} \frac{\deln_m}{\hatlam_{p_m}}\lamn_{K+1}p_m \lam_1(\frac{1}{d}\matrootinv{\Lam}\Y \Y^{T}\matrootinv{\Lam})
\end{eqnarray}
By Theorem \ref{thm1:eigenvalue}(b), Assumption \ref{assume:nonspike}, and Lemma \ref{lem:max_eigenvalue}, for large $n$ and $d$, the last expression of (\ref{eq:thm2_bi_bullet2_lasteq1_bound}) becomes a constant multiplied by $\frac{d}{n\deln_m}$ that converges to 0. Therefore, the proof of (\ref{eq:thm2_bi_bullet2_lasteq1}) is complete.

\emph{proof of (\ref{eq:thm2_bi_bullet2_lasteq2}).} For $m=1, \cdots, h$, from $\frac{\hatlam_{p_m}}{\lamn_{p_h+1}}\sum_{k=p_h+1}^{K}\sum_{i=1}^{p_m}\hatuki^2 \leq \sum_{k=p_h+1}^{K} \frac{1}{\lamn_k} \sum_{i=1}^{p_m}\hatlam_{i}\hatuki^2$, 
\begin{eqnarray}\label{eq:thm2_bi_bullet2_lasteq2_bound}
    \sum_{k=p_h+1}^{K}\sum_{i=1}^{p_m}\hatuki^2 & \leq &  \frac{\lamn_{p_h+1}}{\hatlam_{p_m}} \sum_{k=p_h+1}^{K} \frac{1}{\lamn_k} \sum_{i=1}^{p_m}\hatlam_{i}\hatuki^2 \nonumber \\
    & \leq & \frac{\lamn_{p_h+1}}{\hatlam_{p_m}}\sum_{k=p_h+1}^{K} \meanj \frac{1}{\lamn_k}y_{kj}^2 ~~~~\mathrm{by}~(\ref{eq:S1}) \nonumber \\
    & = & \frac{\lamn_{p_h+1}}{\deln_{h+1}}\delfrac{h+1}{m}\frac{\deln_m}{\lamn_{p_m}}\frac{\lamn_{p_m}}{\hatlam_{p_m}}\sum_{k=p_h+1}^{K} \meanj \frac{1}{\lamn_k}y_{kj}^2
\end{eqnarray}
For $m=h$, it follows from $\delfrac{h+1}{h} \rightarrowas 0$, Theorem \ref{thm1:eigenvalue}(b), and (\ref{eq:S1}), the proof of (\ref{eq:thm2_bi_bullet2_lasteq2}) is complete. 

\emph{proof of (\ref{eq:thm2_bi_bullet2_lasteq3}).} From (\ref{eq:S1}), we obtain 
\begin{eqnarray}\label{eq:thm2_bi_bullet2_lasteq3_eq1}
    \frac{1}{\lamn_k}\sumi \hatlam_i \hatuki^2 & = & \frac{1}{\lamn_k}\sum_{i=1}^{p_{h-1}} \hatlam_i \hatuki^2 + \frac{1}{\lamn_k}\sum_{i \in H_h} \hatlam_i \hatuki^2 + \frac{1}{\lamn_k}\sum_{i=p_h+1}^{d} \hatlam_i \hatuki^2 \nonumber \\
    & \rightarrowas & 1~~~~\mathrm{for}~k \in H_h.
\end{eqnarray}
The third part of (\ref{eq:thm2_bi_bullet2_lasteq3_eq1}) 
\begin{eqnarray}
    \frac{1}{\lamn_k}\sum_{i=p_h+1}^{d} \hatlam_i \hatuki^2 & \leq & \frac{\hatlam_{p_h+1}}{\lamn_{p_h}} \sum_{i=p_h+1}^{d} \hatuki^2 \nonumber \\
    & = & \frac{\deln_{p_h}}{\lamn_{p_h}}\frac{n\hatlam_{p_h+1}}{d}\frac{d}{n\deln_{p_h}}\sum_{i=p_h+1}^{d} \hatuki^2 \nonumber \\
    & \rightarrowas & 0 ~~~~\mathrm{for}~k \in H_h.
\end{eqnarray}
The convergence follows from Theorem \ref{thm1:eigenvalue} (b) and (\ref{eq:thm2_bi_bullet2_eq1}). Thus, (\ref{eq:thm2_bi_bullet2_lasteq3_eq1}) becomes 
\begin{eqnarray}\label{eq:thm2_bi_bullet2_lasteq3_eq2}
    \frac{1}{\lamn_k}\sum_{i=1}^{p_{h-1}} \hatlam_i \hatuki^2 + \frac{1}{\lamn_k}\sum_{i \in H_h} \hatlam_i \hatuki^2  \rightarrowas 1~~~~\mathrm{for}~k \in H_h.
\end{eqnarray}
Because $\frac{\hatlam_i}{\lamn_k} \rightarrowas \delfrac{m}{h}$ for $i \in H_m$ and $k \in H_h$, (\ref{eq:thm2_bi_bullet2_lasteq3_eq2}) can be rewritten as
\begin{eqnarray}\label{eq:thm2_bi_bullet2_lasteq3_eq3}
    \sum_{m=1}^{h-1}\sum_{i \in H_m} \delfrac{m}{h} \hatuki^2 + \frac{1}{\lamn_k}\sum_{i \in H_h} \hatlam_i \hatuki^2  \rightarrowas 1~~~~\mathrm{for}~k \in H_h.
\end{eqnarray}
Also, since $\sumi \hatuki^2 = 1$ and $\sum_{i=p_h+1}^{d} \hatuki^2 \rightarrowas 0$ for $k \in H_h$ by (\ref{eq:thm2_bi_bullet2_eq1}), we have
\begin{eqnarray}\label{eq:thm2_bi_bullet2_lasteq3_eq4}
    \sum_{m=1}^{h-1}\sum_{i \in H_m} \hatuki^2 + \frac{1}{\lamn_k}\sum_{i \in H_h} \hatlam_i \hatuki^2  \rightarrowas 1~~~~\mathrm{for}~k \in H_h.
\end{eqnarray}
From (\ref{eq:thm2_bi_bullet2_lasteq3_eq3}), (\ref{eq:thm2_bi_bullet2_lasteq3_eq4}), and $\delfrac{m}{h} \geq 1$, we get $\sum_{m=1}^{h-1}\sum_{i \in H_m} \hatuki^2 \rightarrowas 0$ and $\sum_{i \in H_h} \hatuki^2 \rightarrowas 1$. This already shows the subspace consistency, and for convergence rate of (\ref{eq:thm2_bi_bullet2_lasteq3}), $\sum_{m=1}^{h-1}\sum_{i \in H_m} \hatuki^2 \leq \sum_{m=1}^{h-1}\sum_{i \in H_m}\delfrac{m}{h} \hatuki^2 \leq (h-1) \delfrac{h-1}{h} \sum_{i \in H_m} \hatuki^2 \rightarrowas 0$ for $m=1, \cdots, h-1$. Hence, we complete the proof of the step 1. We move on to Step 2.

2. Establish the convergence for the $m$-th tier sequentially from $m=h-1$ to $1$, i.e. $\mathrm{angle}(\hat{U}_i, S_m) = o(\big\{ \frac{\deln_{m}}{\deln_{m-1}} \vee \frac{\deln_{m+1}}{\deln_{m}} \big\}^{1/2})$ for $i \in H_m$ for each $m$.

We want to show 
\begin{eqnarray*}
    \sum_{k \in H_m} \hatuki^2 = 1 + \littleo{\delfrac{m}{m-1} \vee \delfrac{m+1}{m}}~~~~\mathrm{for}~i \in H_m,~m=1, \cdots, h-1,
\end{eqnarray*}
which is equivalent to showing
\begin{eqnarray}\label{eq:thm2_b_step2_main}
    \sum_{i \in H_m} \hatuki^2 = 1 + \littleo{\delfrac{m}{m-1} \vee \delfrac{m+1}{m}}~~~~\mathrm{for}~k \in H_m,~m=1, \cdots, h-1.
\end{eqnarray}
Let us start with $m=h-1$. We have 
\begin{eqnarray}\label{eq:thm2_b_step2_sum1}
    \sum_{i \in H_{h-1}}\hatuki^2 =1-\sum_{m=1}^{h-2}\sum_{i \in H_m} \hatuki^2 - \sum_{i=p_{h-1}+1}^{d}\hatuki^2,
\end{eqnarray}
and thus we will show $ \sum_{m=1}^{h-2}\sum_{i \in H_m} \hatuki^2$ and $ \sum_{i=p_{h-1}+1}^{d}\hatuki^2$ both converge to 0. Since $\sum_{i=p_{h-1}+1}^{d}\hatuki^2 \leq \sum_{k=1}^{p_{h-1}}\sum_{i=p_{h-1}+1}^{d}\hatuki^2 = \sum_{k=p_{h-1}+1}^{d}\sum_{i=1}^{p_{h-1}}\hatuki^2 = \sum_{k=p_{h-1}+1}^{p_h}\sum_{i=1}^{p_{h-1}}\hatuki^2 + \sum_{k=p_{h}+1}^{d}\sum_{i=1}^{p_{h-1}}\hatuki^2$, it follows from $ \sum_{k=p_{h-1}+1}^{p_h}\sum_{i=1}^{p_{h-1}}\hatuki^2 = \littleo{\delfrac{h}{h-1}}$ by (\ref{eq:thm2_bi_bullet2_lasteq3}) and $\sum_{k=p_{h}+1}^{d}\sum_{i=1}^{p_{h-1}}\hatuki^2 = \bigo{\frac{d}{n\deln_{h-1}}}$ by (\ref{eq:thm2_bi_bullet2_eq2}) that 
\begin{eqnarray}\label{eq:thm2_b_step2_part2}
    \sum_{i=p_{h-1}+1}^{d}\hatuki^2 = \littleo{\delfrac{h}{h-1}}.
\end{eqnarray}
From $\sum_{m=1}^{h-2}\sum_{i \in H_m} \hatuki^2 + \sum_{i \in H_{h-1}}\hatuki^2 + \sum_{i=p_{h-1}+1}^{d}\hatuki^2 = 1$, we have 
\begin{eqnarray}\label{eq:thm2_b_step2_part1_sub1}
    \sum_{m=1}^{h-2}\sum_{i \in H_m} \hatuki^2 + \sum_{i \in H_{h-1}}\hatuki^2 \rightarrowas 1
\end{eqnarray}
 by (\ref{eq:thm2_b_step2_part2}). Also, by (\ref{eq:S1}), we have 
\begin{eqnarray*}
 \frac{1}{\lamn_k}\sumi \hatlam_i \hatuki^2 = \frac{1}{\lamn_k}\sum_{i=1}^{p_{h-2}} \hatlam_i \hatuki^2 + \frac{1}{\lamn_k}\sum_{i \in H_{h-1}} \hatlam_i \hatuki^2 + \frac{1}{\lamn_k}\sum_{i=p_{h-1}+1}^{d} \hatlam_i \hatuki^2 \rightarrowas 1~~~\mathrm{for}~k~\in H_{h-1}. 
 \end{eqnarray*}
By $\limunder{n \rightarrow \infty}  \frac{\lamn_{p_{h-1}+1}}{\lamn_k} \rightarrowas 0$, we get$\frac{1}{\lamn_k}\sum_{i=p_{h-1}+1}^{d} \hatlam_i \hatuki^2 \leq \frac{\lamn_{p_{h-1}+1}}{\lamn_k} \sum_{i=p_{h-1}+1}^{d} \hatuki^2 \rightarrowas 0$, which leads to 
 \begin{eqnarray}\label{eq:thm2_b_step2_part1_sub2}
     \sum_{m=1}^{h-2}\sum_{i \in H_m} \delfrac{m}{h-1}  \hatuki^2 + \sum_{i \in H_{h-1}} \hatuki^2 \rightarrowas 1~~~~\mathrm{for}~ k \in H_{h-1}
 \end{eqnarray}
 since $\limunder{n \rightarrow \infty} \frac{\hatlam_i}{\lamn_k} \rightarrow \delfrac{m}{h-1}$ for $i \in H_m$ and $k \in H_{h-1}$. Combining (\ref{eq:thm2_b_step2_part1_sub1}) and (\ref{eq:thm2_b_step2_part1_sub2}) with $\delfrac{m}{h-1}>1$ for $m=1, \cdots, h-2$ yields $\sum_{m=1}^{h-2}\sum_{i \in H_m} \delfrac{m}{h-1}  \hatuki^2 \rightarrowas 0$. Therefore, we have $\delfrac{h-2}{h-1} \sum_{i=1}^{p_{h-2}}\hatuki^2 \leq \sum_{i=1}^{p_{h-2}}\delfrac{m}{h-1} \hatuki^2 \rightarrowas 0$, which gives 
 \begin{eqnarray}\label{eq:thm2_b_step2_part1}
     \sum_{m=1}^{h-2}\sum_{i \in H_m} \hatuki^2 = \littleo{\delfrac{h-1}{h-2}}~~~~\mathrm{for}~k \in H_{h-1}.
 \end{eqnarray}
Hence, (\ref{eq:thm2_b_step2_main}) with $m=h-1$ follows from (\ref{eq:thm2_b_step2_sum1}), (\ref{eq:thm2_b_step2_part2}) and (\ref{eq:thm2_b_step2_part1}) and the proofs for the other $m=h-1, \cdots, 1$ are similar to $m=h-1$, and thus omitted. Lastly, we complete the proof of Theorem \ref{thm2:eigenvector} (b) by showing Step 3. 


3. Establish the strong inconsistency of the remaining sample eigenvectors, i.e. $|<\hat{U}_i, U_i>| = O(\big\{ \frac{n\lamn_i}{d} \big\}^{1/2})$ for $i > p_h$. From (\ref{eq:S1}), we have 
\begin{eqnarray*}
    \maxunder{i>p_h} \frac{n\hatlam_i}{d}\frac{d}{n\lamn_i}\hatu_{ii}^2 = \maxunder{i>p_h} \frac{\hatlam_i}{\lamn_i}\hatu_{ii}^2 \leq \meanj \frac{1}{\lamn_k}\haty_{ki}^2 \rightarrowas 1.
\end{eqnarray*}
Since $\frac{n\hatlam_i}{d} \rightarrowas \clam$ for $i > p_h$ by Theorem \ref{thm1:eigenvalue} (b), we obtain $\maxunder{i>p_h} ~\hatu_{ii}^2 = \bigo{ \frac{n\lamn_i}{d} }$, which completes the proof of Theorem \ref{thm2:eigenvector} (b).

\end{proof}

\pagebreak
\bibliographystyle{apalike}
\bibliography{references.bib} 
\end{document}